\renewcommand{\P}{\mathbb{P}}
\newcommand{\E}{\mathbb{E}}
 \newcommand{\ds}{\displaystyle}
\newtheorem{thm}{Theorem}[section]
\newtheorem{lea} [thm]{Lemma} 
\newtheorem{cor}[thm]{Corollary} 
\numberwithin{equation}{section} 
\numberwithin{thm}{section}
\numberwithin{rem}{section}
\numberwithin{ex}{section}
\begin{document}
\title{On convergence of the sample correlation matrices in high-dimensional data}
\date{}
\author{S\'ev\'erien Nkurunziza{\thanks {University of Windsor, 401 Sunset
Avenue, Windsor, Ontario, N9B 3P4. Email: severien@uwindsor.ca}} \quad{ } and \quad{ } Yueleng Wang{\thanks {University of Windsor, 401 Sunset
Avenue, Windsor, Ontario, N9B 3P4. Email: wang1gv@uwindsor.ca}}}
\thispagestyle{empty} \selectlanguage{english} \maketitle
\maketitle\thispagestyle{empty}

\begin{abstract}
In this paper, we consider an estimation problem concerning the matrix of correlation coefficients in context of high dimensional data settings.
In particular, we revisit some results in Li and Rolsalsky [Li, D. and Rolsalsky, A. (2006). {\it Some strong limit theorems for the largest entries
of sample correlation matrices},  The Annals of Applied Probability, {\bf 16},  1, 423--447]. Four of the main theorems of Li and Rolsalsky~(2006) are established in their full generalities and we simplify substantially some proofs of the quoted paper. Further, we generalize a theorem which is useful in deriving the existence
of the $p^{\mbox{th}}$ moment as well as in studying the convergence rates in law of large numbers.
 \end{abstract}





\noindent {\it Keywords:} Convergence almost surely; Correlation coefficient; Strong Law of Large numbers; convergence rates.
 \pagenumbering{arabic} \addtocounter{page}{0}
\section{Introduction}
As in Li and Rolsalsky (2006), we consider an estimation problem concerning the matrix of correlation coefficients in context of high-dimensional data. In particular, as in the quoted paper, we are interested in asymptotic properties of the largest entries of the matrix of the correlation coefficients when the sample size may be smaller than the parameters.
Thus, we use the same notations as in Li and Rolsalsky (2006). Namely, let $X= (X_1,X_2,...,X_p)$ be a $p$-variate random vector, and $\textbf{M}_{n,p_n} = (X_{k,i})_{1\leq k\leq n,1\leq i\leq p_n}$, rows of $\textbf{M}_{n,p_n}$ are independent copies of $\textbf{X}$. Let $\mathcal{M} = \{X_{k,i}; i\geq 1,k\geq 1\}$ be an array of i.i.d. random variables. Further, let $\bar{X}_{i}^{(n)} = \ds{\sum_{k=1}^n}X_{k,i}/n,$ $ \textbf{X}_i^{(n)}$ means $i$th column of $\textbf{M}_{n,p_n}$, and let $W_n = \ds{\max_{1\leq i< j \leq p_n}}\big|\sum_{k=1}^nX_{k,i}X_{k,j}\big|$. Li and Rosalsky~(2006) studied the limit behavior of $W_n$ and that of $L_n$ which is defined as
\begin{align*}
 L_n= \max_{1\leq i < j\leq p_n}|\hat{\rho}_{i,j}^{(n)}| \quad{} \mbox{with } \quad{ }
  \hat{\rho}_{i,j}^{(n)}= \ds{\frac{\ds{\sum_{k=1}^{n}}(X_{k,i} - \bar{X}_i^{(n)})(X_{k,j} - \bar{X}_j^{(n)})}{\big(\ds{\sum_{k=1}^{n}}(X_{k,i}- \bar{X}_i^{(n)})^2\big)^{1/2}\big(\ds{\sum_{k=1}^{n}}(X_{k,j}- \bar{X}_j^{(n)})^2\big)^{1/2}}}.
 \end{align*}
In Jiang~(2004), the author derived the asymptotic properties of the statistic $L_n$ and he proposed a test for testing if the components of the p-column vector $X$ are uncorrelated.

In this paper, we establish the asymptotic results which refine the analysis in Li and Rosalsky~(2006). More specifically, with respect to the similar work in literature, we extend the existing findings in three ways. First, Theorems 2.1 and 2.3 of Li and Rosalsky (2006) are established in their full generalities. Thanks to the established results, we also revisit the statement given in Remark 2.1 in Li and Rosalsky (2006). Further, from the established results, we simplify remarkably the proof of the result stated in Remark 2.3 of Li and Rosalsky (2006).
More precisely, we prove that the condition which is stated as sufficient is also necessary. Second, we refine Theorems~3.2 and Theorem~3.3 of Li and Rosalsky (2006) and, as compared to the proofs given in Li and Rosalsky (2006), we provide the proofs which are significantly shorter than that in the quoted paper.  Third, we generalize Theorem 3.2.1 in Chung (1974) which is useful in establishing the existence of the $p^{\mbox{th}}$-moment as well as in studying the convergence rates in Law of Large numbers. Specifically, the established result is useful in deriving the main results in Erdös (1949), Baum and Katz (1965) and Katz (1963) among others.

The remainder of this paper is organized as follows. Section~\ref{sec:prelim} presents some preliminary results.
In Section~\ref{sec:mainres}, we present the main results of this paper. 
Finally, for the convenience of the reader, we present some proofs in the Appendix~\ref{sec:appendA} and, we recall in the Appendix~\ref{sec:appendB}
some existing results used in this paper.
\section{Some Preliminary Results}\label{sec:prelim}
In this section, we derive some results which are useful in establishing the main results of this paper. In particular, the results of this section generalize Theorem 3.2.1 given in Chung~(1974). For the convenience of the reader, the quoted theorem stipulates that for a random variable $X$, we have $\ds{\sum_{n=1}^\infty} \P(|X|\geqslant n)\leqslant\E(|X|)\leqslant 1+\ds{\sum_{n=1}^\infty} \P(|X|\geqslant n)$. Thus,\\ $\E(|X|)<\infty$ if and only if $\ds{\sum_{n=1}^\infty} \P(|X|\geqslant n)<\infty$. To introduce some notations, let $a_{n}=\mathcal{O}(b_n)$  stand for the sequence $a_{n}/b_{n}$, n=1,2,\dots is bounded.

\begin{thm}
  \label{thm:2}
  Let $\{\alpha_n\}_{n=0}^{\infty}$ be nonnegative sequence of real numbers, let $\{\beta_n\}_{n=0}^{\infty}$ be nonnegative and nondecreasing sequence of real numbers such that 
  $\beta_{0}=0$, $\ds{\lim_{n\to\infty}}\beta_n = \infty,$ \\$\beta_{n+1}-\beta_n = \mathcal{O}(\beta_n)$ and
   $ c^{-1}\alpha_n\leq \beta_n - \beta_{n-1} \leq c\alpha_n \quad n=1,2,\dots, \text{  for some }\\ c\geqslant1.$
  Then, for any random variable $X$,\\
    $c^{-1}\ds{\sum_{n=1}^\infty} \alpha_n\P(|X|\geqslant \beta_n)\leqslant \E(|X|)\leqslant \beta_{1}+(B+1)c\ds{\sum_{n=1}^\infty} \alpha_n\P(|X|\geqslant \beta_n),$ for some $B > 0$.\\
   Thus,
    $\E(|X|) < \infty$
   if and only if
    $\ds{\sum_{n=1}^\infty} \alpha_n\P(|X|\geqslant \beta_n) <\infty.$
\end{thm}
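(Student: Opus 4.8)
The plan is to build everything on the layer-cake identity $\E(|X|)=\ds\int_0^\infty\P(|X|\geqslant t)\,dt$ and to compare this integral with the Riemann-type sum attached to the partition $0=\beta_0\leqslant\beta_1\leqslant\beta_2\leqslant\cdots$ of $[0,\infty)$; this partition exhausts $[0,\infty)$ because $\beta_n\uparrow\infty$. Writing $F(t)=\P(|X|\geqslant t)$, which is nonincreasing, I would first split $\E(|X|)=\ds\sum_{n=1}^\infty\int_{\beta_{n-1}}^{\beta_n}F(t)\,dt$ and use monotonicity of $F$ on each slab, namely $F(\beta_n)\leqslant F(t)\leqslant F(\beta_{n-1})$ for $t\in[\beta_{n-1},\beta_n)$, so that each integral is sandwiched between $(\beta_n-\beta_{n-1})F(\beta_n)$ and $(\beta_n-\beta_{n-1})F(\beta_{n-1})$.

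For the lower bound this is already enough: summing the left inequalities and invoking $\beta_n-\beta_{n-1}\geqslant c^{-1}\alpha_n$ gives $\E(|X|)\geqslant\ds\sum_n(\beta_n-\beta_{n-1})F(\beta_n)\geqslant c^{-1}\ds\sum_n\alpha_n\P(|X|\geqslant\beta_n)$, which is the stated left-hand estimate.

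The upper bound is the delicate part. Summing the right inequalities yields $\E(|X|)\leqslant\ds\sum_n(\beta_n-\beta_{n-1})F(\beta_{n-1})$, and I would then measure the defect between this sum and the target $S:=\ds\sum_n(\beta_n-\beta_{n-1})F(\beta_n)$. Writing $F(\beta_{n-1})=F(\beta_n)+\P(\beta_{n-1}\leqslant|X|<\beta_n)$, the defect equals $\ds\sum_n(\beta_n-\beta_{n-1})\P(\beta_{n-1}\leqslant|X|<\beta_n)$, i.e. the expected length $\E(\beta_N-\beta_{N-1})$ of the unique slab $[\beta_{N-1},\beta_N)$ that contains $|X|$. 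The crucial observation is the Fubini/Abel rearrangement $S=\E(\beta_{N-1})$, obtained by interchanging the order of summation, the inner increments $\ds\sum_{k=1}^{m-1}(\beta_k-\beta_{k-1})$ telescoping to $\beta_{m-1}$. Here the hypothesis $\beta_{n+1}-\beta_n=\mathcal{O}(\beta_n)$ enters, and it must be used multiplicatively: it says $\beta_n\leqslant(B+1)\beta_{n-1}$ for $n\geqslant2$, so on the slab containing $|X|$ one has $\beta_N-\beta_{N-1}\leqslant B\beta_{N-1}$ when $N\geqslant2$, while the single boundary slab $N=1$ contributes at most $\beta_1$, which is the source of the additive constant. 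Taking expectations, the defect is at most $\beta_1+B\,\E(\beta_{N-1})=\beta_1+BS$, whence $\E(|X|)\leqslant S+\beta_1+BS=\beta_1+(B+1)S$; finally $\beta_n-\beta_{n-1}\leqslant c\alpha_n$ gives $S\leqslant c\ds\sum_n\alpha_n\P(|X|\geqslant\beta_n)$, producing the claimed right-hand estimate.

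The main obstacle, and the point I would be most careful about, is precisely this use of the growth condition. Bounding each increment $\beta_n-\beta_{n-1}$ individually by $B\beta_{n-1}$ inside the sum would replace $\alpha_n$ by $\beta_n$ and yield the useless quantity $\ds\sum_n\beta_n\P(|X|\geqslant\beta_n)$; the correct route is to pass to $\E(\beta_{N-1})$ first and only then apply $\beta_N\leqslant(B+1)\beta_{N-1}$ under the expectation, which keeps the estimate proportional to $S$, together with the separate treatment of the index $N=1$ where $\beta_0=0$ forces the additive term $\beta_1$. Once both inequalities are in hand, the equivalence $\E(|X|)<\infty\iff\ds\sum_n\alpha_n\P(|X|\geqslant\beta_n)<\infty$ is immediate, since the two sides of the two-sided bound are finite together.
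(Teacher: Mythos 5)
Your proof is correct and delivers exactly the stated constants, but it takes a genuinely different route from the paper's. The paper slices the expectation over the events $\Lambda_n=\{\beta_n\leqslant|X|<\beta_{n+1}\}$, obtains the sandwich $\sum_{n}\beta_n\P(\Lambda_n)\leqslant\E(|X|)\leqslant\sum_{n}\beta_{n+1}\P(\Lambda_n)$, and then converts $\sum_n\beta_n\P(\Lambda_n)$ into $\sum_n(\beta_n-\beta_{n-1})\P(|X|\geqslant\beta_n)$ by Abel summation; the price is a boundary term $\beta_N\P(|X|\geqslant\beta_{N+1})$, which the paper can only show vanishes (via dominated convergence) when $\E(|X|)<\infty$, so it must treat the case $\E(|X|)=\infty$ separately. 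You instead start from the layer-cake formula $\E(|X|)=\int_0^\infty\P(|X|\geqslant t)\,dt$, sandwich the survival function on each interval $[\beta_{n-1},\beta_n)$, and establish the key identity $\sum_n(\beta_n-\beta_{n-1})\P(|X|\geqslant\beta_n)=\E(\beta_{N-1})$ by Tonelli; since all terms are nonnegative, no boundary term appears, no dominated convergence is needed, and no finite/infinite case split arises. The two arguments ultimately manipulate the same objects --- your $S$ and your defect $D$ are precisely the paper's $\sum_n\beta_n\P(\Lambda_n)$ and $\sum_n(\beta_{n+1}-\beta_n)\P(\Lambda_n)$, and both proofs use the growth hypothesis identically (the increment of the slab containing $|X|$ is at most $B$ times its left endpoint, with the first slab contributing the additive $\beta_1$) --- but your Tonelli rearrangement replaces the paper's summation by parts plus limiting argument, which makes the write-up shorter and uniform in $X$. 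It is also worth noting how the difficulty is redistributed: in your framing the lower bound $c^{-1}\sum_n\alpha_n\P(|X|\geqslant\beta_n)\leqslant\E(|X|)$ is immediate from the layer cake, whereas in the paper that is exactly the direction requiring the delicate boundary-term control; conversely, the paper's first sandwich is immediate while your upper bound carries the work through the identity $S=\E(\beta_{N-1})$.
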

The proof of this theorem is given in the Appendix \ref{sec:appendA}. To illustrate how the established result generalizes Theorem~3.2.1 in Chung~(1974), we note first that, from the quoted theorem, we also have, $\ds{\sum_{n=1}^\infty} \P(|X|\geqslant n)\leqslant\E(|X|)\leqslant 1+2\ds{\sum_{n=1}^\infty} \P(|X|\geqslant n)$.
For the particular case where $\alpha_{n}=1$, $\beta_{n}=n$, Theorem~\ref{thm:2} yields Theorem~3.2.1 in Chung~(1974) with $c=B=1$. 
By using this theorem, we establish the following
result which is useful in deriving one of the main result of this paper.
\begin{cor} \label{coro:1}
 Let $\alpha>0$, let $\beta>0$ and let $X$ be a random variable. Then, \\ $\ds{\sum_{n=1}^\infty} n^\alpha\P\Big(|X|>n^\beta\Big) < \infty$ if and only if $\E\Big( |X|^{\frac{\alpha+1}{\beta}} \Big) < \infty$.
\end{cor}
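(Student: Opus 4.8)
The plan is to derive this corollary from Theorem~\ref{thm:2} by a change of variable. I would apply Theorem~\ref{thm:2} to the random variable $Z=|X|^{(\alpha+1)/\beta}$, so that $\E(|Z|)=\E\big(|X|^{(\alpha+1)/\beta}\big)$, and to the sequences $\alpha_n=n^{\alpha}$ and $\beta_n=n^{\alpha+1}$, with $\alpha_0=\beta_0=0$. With this choice one has $\P(|Z|\geqslant\beta_n)=\P\big(|X|^{(\alpha+1)/\beta}\geqslant n^{\alpha+1}\big)=\P(|X|\geqslant n^{\beta})$, so that the series $\ds{\sum_{n=1}^\infty}\alpha_n\P(|Z|\geqslant\beta_n)$ appearing in Theorem~\ref{thm:2} coincides with $\ds{\sum_{n=1}^\infty}n^{\alpha}\P(|X|\geqslant n^{\beta})$, which is the target series up to the strictness of the inequality.

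Next I would check that $\alpha_n$ and $\beta_n$ satisfy the hypotheses of Theorem~\ref{thm:2}. The sequence $\beta_n=n^{\alpha+1}$ is nonnegative and nondecreasing, $\beta_0=0$, and $\beta_n\to\infty$. The main step is the two-sided bound $c^{-1}\alpha_n\leqslant\beta_n-\beta_{n-1}\leqslant c\alpha_n$. By the mean value theorem, $\beta_n-\beta_{n-1}=n^{\alpha+1}-(n-1)^{\alpha+1}=(\alpha+1)\xi_n^{\alpha}$ for some $\xi_n\in(n-1,n)$, whence $(\alpha+1)(n-1)^{\alpha}\leqslant\beta_n-\beta_{n-1}\leqslant(\alpha+1)n^{\alpha}$ for $n\geqslant 1$. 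Dividing by $\alpha_n=n^{\alpha}$, the ratio $(\beta_n-\beta_{n-1})/\alpha_n$ lies in a fixed compact subinterval of $(0,\infty)$ for all $n\geqslant 2$ (since $((n-1)/n)^{\alpha}\geqslant 2^{-\alpha}$ there), while the case $n=1$ gives $\beta_1-\beta_0=1=\alpha_1$; hence a finite constant $c\geqslant 1$ exists. The same bound yields $\beta_{n+1}-\beta_n\leqslant(\alpha+1)(n+1)^{\alpha}=\mathcal{O}(n^{\alpha})=\mathcal{O}(\beta_n)$, so all hypotheses hold.

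Theorem~\ref{thm:2} then gives that $\E\big(|X|^{(\alpha+1)/\beta}\big)<\infty$ if and only if $\ds{\sum_{n=1}^\infty}n^{\alpha}\P(|X|\geqslant n^{\beta})<\infty$, and it remains only to pass from $\geqslant$ to $>$. Writing $S_{>}=\ds{\sum_{n=1}^\infty}n^{\alpha}\P(|X|> n^{\beta})$ and $S_{\geqslant}=\ds{\sum_{n=1}^\infty}n^{\alpha}\P(|X|\geqslant n^{\beta})$, one has trivially $S_{>}\leqslant S_{\geqslant}$; and since $\{|X|\geqslant n^{\beta}\}\subseteq\{|X|>(n-1)^{\beta}\}$, the substitution $m=n-1$ together with $(m+1)^{\alpha}\leqslant 2^{\alpha}m^{\alpha}$ for $m\geqslant 1$ gives $S_{\geqslant}\leqslant 1+2^{\alpha}S_{>}$. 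Hence $S_{>}$ and $S_{\geqslant}$ converge or diverge together, and the corollary follows. I expect the verification of the two-sided comparison constant $c$ (in particular its uniformity over all $n$, including small $n$) and the $\geqslant$-versus-$>$ reconciliation to be the only points requiring genuine care; the rest is a direct substitution into Theorem~\ref{thm:2}.
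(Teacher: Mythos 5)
Your proposal is correct and follows essentially the same route as the paper: apply Theorem~\ref{thm:2} to the variable $|X|^{(\alpha+1)/\beta}$ with $\alpha_n=n^{\alpha}$ and $\beta_n=n^{\alpha+1}$, the only cosmetic difference being that you verify the two-sided bound $c^{-1}\alpha_n\leqslant\beta_n-\beta_{n-1}\leqslant c\alpha_n$ via the mean value theorem while the paper does it by direct algebra and boundedness of a convergent sequence. Your explicit reconciliation of the strict inequality $>$ in the corollary with the $\geqslant$ in Theorem~\ref{thm:2} is a point the paper's proof passes over silently, and it is a legitimate extra step worth keeping.
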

The proof of this corollary is given in the Appendix \ref{sec:appendA}.  Further, from Theorem~\ref{thm:2} and Corollary~\ref{coro:1}, we derive the following corollary which is useful in establishing the second main result of this paper.

\begin{cor} \label{coro:2}
  Let $\alpha,\beta>0$, let  $\{\alpha_n\}_{n=1}^{\infty}$ and $\{\beta_n\}_{n=1}^{\infty}$ be nonnegative sequences of real numbers and suppose that $\alpha_n/n^\alpha$ and $\beta_n/n^\beta$ are bounded away from $0$ and $\infty$. Then, for a random variable $X$, $\ds{\sum_{n=1}^\infty} \alpha_n\P\Big( |X|>\beta_n \Big) < \infty$ if and only if $\E\Big( |X|^{\frac{\alpha+1}{\beta}} \Big) < \infty$.
\end{cor}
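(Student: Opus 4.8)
The plan is to deduce Corollary~\ref{coro:2} from Corollary~\ref{coro:1} by a two-sided comparison (sandwich) argument, rather than invoking Theorem~\ref{thm:2} directly: the sequences here are controlled only in order of magnitude and need not satisfy the monotonicity and increment conditions ($\beta_0=0$, $\beta$ nondecreasing, $\beta_{n+1}-\beta_n=\mathcal{O}(\beta_n)$, $c^{-1}\alpha_n\leqslant\beta_n-\beta_{n-1}\leqslant c\alpha_n$) required there. First I would unpack the hypotheses. Since $\alpha_n/n^\alpha$ is bounded away from $0$ and $\infty$, there are constants $0<a\leqslant A<\infty$ with $a\,n^\alpha\leqslant\alpha_n\leqslant A\,n^\alpha$ for all $n\geqslant1$; likewise there are $0<b\leqslant B<\infty$ with $b\,n^\beta\leqslant\beta_n\leqslant B\,n^\beta$.

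The key intermediate step is a rescaled form of Corollary~\ref{coro:1}: for any fixed constant $c>0$, $\ds{\sum_{n=1}^\infty}n^\alpha\,\P\big(|X|>c\,n^\beta\big)<\infty$ if and only if $\E\big(|X|^{\frac{\alpha+1}{\beta}}\big)<\infty$. To see this, set $Y=X/c$, so that $\{|X|>c\,n^\beta\}=\{|Y|>n^\beta\}$ and hence $\P\big(|X|>c\,n^\beta\big)=\P\big(|Y|>n^\beta\big)$. Applying Corollary~\ref{coro:1} to $Y$ shows the series is finite if and only if $\E\big(|Y|^{\frac{\alpha+1}{\beta}}\big)<\infty$, and since $\E\big(|Y|^{\frac{\alpha+1}{\beta}}\big)=c^{-\frac{\alpha+1}{\beta}}\,\E\big(|X|^{\frac{\alpha+1}{\beta}}\big)$ with a finite positive prefactor, this moment is finite exactly when $\E\big(|X|^{\frac{\alpha+1}{\beta}}\big)$ is.

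With this in hand I would exploit the monotonicity of the survival function $t\mapsto\P(|X|>t)$. From $b\,n^\beta\leqslant\beta_n\leqslant B\,n^\beta$ we obtain $\P\big(|X|>B\,n^\beta\big)\leqslant\P\big(|X|>\beta_n\big)\leqslant\P\big(|X|>b\,n^\beta\big)$, and combining with the bounds on $\alpha_n$ gives $a\,n^\alpha\,\P\big(|X|>B\,n^\beta\big)\leqslant\alpha_n\,\P\big(|X|>\beta_n\big)\leqslant A\,n^\alpha\,\P\big(|X|>b\,n^\beta\big)$ for every $n$. Summing over $n$ sandwiches $\ds{\sum_n}\alpha_n\,\P(|X|>\beta_n)$ between positive constant multiples of $\ds{\sum_n}n^\alpha\,\P(|X|>B\,n^\beta)$ and $\ds{\sum_n}n^\alpha\,\P(|X|>b\,n^\beta)$; by the rescaled corollary (with $c=B$ and $c=b$) both of these converge if and only if $\E\big(|X|^{\frac{\alpha+1}{\beta}}\big)<\infty$. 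The upper bound then yields one implication and the lower bound the other, establishing the equivalence. The only step needing genuine care—rather than mere bookkeeping of constants—is the rescaled version of Corollary~\ref{coro:1}, since that corollary is stated only for the threshold $n^\beta$ and not for a general multiple $c\,n^\beta$; the substitution $Y=X/c$ is precisely what closes this gap.
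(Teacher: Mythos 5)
Your proposal is correct and follows essentially the same route as the paper's own proof: both use the two-sided bounds on $\alpha_n/n^\alpha$ and $\beta_n/n^\beta$ together with monotonicity of $t\mapsto\P(|X|>t)$ to compare the series with $\sum_n n^\alpha\P\big(|X|>c\,n^\beta\big)$, and both handle the constant $c$ by rescaling the random variable (your $Y=X/c$ versus the paper's $|X|/a$ and $a|X|$) before applying Corollary~\ref{coro:1}, noting that such rescaling does not affect finiteness of $\E\big(|X|^{\frac{\alpha+1}{\beta}}\big)$. The only cosmetic difference is that you isolate the rescaled form of Corollary~\ref{coro:1} as an explicit intermediate statement and use four constants instead of the paper's single $a>1$.
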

The proof follows directly from Corollary \ref{coro:1}. Below, we derive a lemma which plays a central role in establishing the main results. Thank to the established lemma, we also simplify significantly the proof of the statement in Remark~2.3 of Li and Rosalsky~(2006). To introduce some notations, we consider that  $X_{1,i},i=1,2,...,$ are independent  and identically distributed random variables, and $\textbf{X}_k = (X_{k,1},X_{k,2},...,)$ is an independent copy of a random vector $\textbf{X}_{1}= \left(X_{1,1},X_{1,2},...,\right)$.
\begin{lea} \label{lem:1}
   Let $m$ be a fixed positive integer and let $\left\{u_{n}\right\}_{n=1}^{\infty}$ be a nonnegative 
   sequence of real numbers. 
   Then, \\$\ds{\sum_{n=1}^\infty} n^m\P\left(\prod_{h=1}^{m}|X_{1,h}|\geq u_{n}\right)< \infty$ if and only if $\ds{\sum_{n=1}^\infty}\P\left(\max_{1\leq i_{1}<i_{2}<\dots<i_{m}\leq n}\prod_{h=1}^{m}|X_{1,i_{h}}|\geq u_n\right) < \infty$.
\end{lea}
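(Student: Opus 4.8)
The plan is to prove the equivalence by relating the two series through a counting argument on the number of $m$-subsets of $\{1,\dots,n\}$. The key observation is that the event $\{\max_{1\leq i_{1}<\dots<i_{m}\leq n}\prod_{h=1}^{m}|X_{1,i_{h}}|\geq u_n\}$ is the union, over all $\binom{n}{m}$ increasing $m$-tuples $(i_1,\dots,i_m)$, of the events $\{\prod_{h=1}^{m}|X_{1,i_{h}}|\geq u_n\}$. By independence and the identical distribution of the coordinates, each of these events has the same probability as $\P(\prod_{h=1}^{m}|X_{1,h}|\geq u_n)$. This immediately suggests comparing $\P(\max_{\cdots}\geq u_n)$ with $\binom{n}{m}\,\P(\prod_{h=1}^{m}|X_{1,h}|\geq u_n)$, and since $\binom{n}{m}$ is comparable to $n^m$ (that is, $\binom{n}{m}/n^m$ is bounded away from $0$ and $\infty$ for fixed $m$), the weight $n^m$ in the first series is the natural match.

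First I would establish the easy direction. By Boole's inequality (the union bound) applied to the union above,
\begin{equation*}
\P\left(\max_{1\leq i_{1}<\dots<i_{m}\leq n}\prod_{h=1}^{m}|X_{1,i_{h}}|\geq u_n\right)\leq \binom{n}{m}\,\P\left(\prod_{h=1}^{m}|X_{1,h}|\geq u_n\right).
\end{equation*}
Summing over $n$ and using $\binom{n}{m}\leq n^m$, the finiteness of $\sum_{n} n^m\P(\prod_{h=1}^{m}|X_{1,h}|\geq u_n)$ forces finiteness of $\sum_{n}\P(\max_{\cdots}\geq u_n)$. This direction requires only the union bound and the elementary binomial estimate.

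The reverse direction is the main obstacle, since a union bound gives an inequality in the wrong direction for a lower bound on the max-probability. Here I would instead lower-bound $\P(\max_{\cdots}\geq u_n)$ in terms of the single-tuple probability. One standard route is a Bonferroni-type second-moment (or inclusion–exclusion truncation) argument, or a blocking argument that extracts from $\{1,\dots,n\}$ a collection of disjoint $m$-blocks whose associated events are independent; with roughly $\lfloor n/m\rfloor$ disjoint blocks one gets $\P(\max_{\cdots}\geq u_n)\geq 1-(1-q_n)^{\lfloor n/m\rfloor}$ where $q_n=\P(\prod_{h=1}^{m}|X_{1,h}|\geq u_n)$, and then one compares $\sum_n[1-(1-q_n)^{\lfloor n/m\rfloor}]$ with $\sum_n n^m q_n$. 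The delicate point is that $1-(1-q_n)^{N}$ is comparable to $N q_n$ only when $N q_n$ is bounded; one must handle separately the indices $n$ for which $n q_n$ (or $n^m q_n$) is large, and show these contribute a convergent-versus-divergent dichotomy consistent with the claim.

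The cleanest way to package the reverse direction is to reduce it to the already-established Corollary~\ref{coro:2} and Lemma~\ref{lem:1}'s own first series. Since $\sum_n \P(\max_{\cdots}\geq u_n)<\infty$ already forces $\P(\max_{\cdots}\geq u_n)\to 0$, for all large $n$ the block-independence lower bound yields $\P(\max_{\cdots}\geq u_n)\geq \tfrac{1}{2}\lfloor n/m\rfloor\, q_n$ whenever $\lfloor n/m\rfloor q_n$ stays bounded, which can be arranged by discarding finitely many terms; iterating the blocking idea across the $m$ coordinates (or applying it $m$ times) upgrades the single factor $n$ to the power $n^m$, matching the weight in the first series. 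I expect the technical heart of the argument to be this bookkeeping that converts the independence over disjoint blocks into the full $n^m$ weight while controlling the regime where $q_n$ is not small; once that comparison is in place, the two-sided equivalence follows.
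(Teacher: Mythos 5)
Your first direction (union bound plus $\binom{n}{m}\le n^m$) is correct, and it is exactly how the paper handles that half. The genuine gap is in the converse, at precisely the step you defer to ``bookkeeping.'' The disjoint-block bound $\P(\max\ge u_n)\ge 1-(1-q_n)^{\lfloor n/m\rfloor}$, with $q_n=\P\bigl(\prod_{h=1}^{m}|X_{1,h}|\ge u_n\bigr)$, is valid, but it can never be upgraded to carry the weight $n^m$: there are only $\lfloor n/m\rfloor$ pairwise disjoint $m$-tuples in $\{1,\dots,n\}$, and once tuples overlap the associated events are \emph{positively}, not negatively, correlated. Indeed, the events $B_T=\{\prod_{h\in T}|X_{1,h}|\le u_n\}$ are decreasing functions of the independent variables $|X_{1,1}|,\dots,|X_{1,n}|$, so Harris's (FKG) inequality gives $\P\bigl(\bigcap_T B_T\bigr)\ge\prod_T\P(B_T)$; consequently every independence-style lower bound for the maximum saturates at order $n\,q_n$ and cannot reach $n^m q_n$. (Concretely, with $m=2$, $n=3$, $u=1$ and $\P(X=0)=\P(X=2)=1/2$, one has $\P\bigl(\max_{1\le i<j\le 3}|X_iX_j|\le 1\bigr)=1/2>(3/4)^3$.) Your fallback, a Bonferroni/second-moment argument, fails for the same structural reason: the cross terms for tuples sharing an index, e.g.\ $\P(|X_1X_2|\ge u_n,\,|X_1X_3|\ge u_n)$, are driven by the event that a single factor is enormous, and they need not be $o(q_n/n)$, so they cannot be absorbed.

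Moreover, no bookkeeping can close this gap, because for an arbitrary nonnegative sequence $\{u_n\}$ the stated equivalence is false: summability of the max-series really only forces $\sum_n n\,q_n<\infty$. Counterexample sketch (for $m=2$, writing $X_i$ for $X_{1,i}$): let $X\ge 1$ take the values $A_k=2^{4^k}$ with probabilities $p_k\asymp 1/(k\log^2 k)$, normalized so that $\sum_k p_k\le 1/2$, and the value $1$ otherwise; put $t(k)=\sum_{j>k}p_j\asymp 1/\log k$ and $u_n=2A_{k_n}$ with $k_n=\lceil\exp(n^2\log^2 n)\rceil$. Since the $A_k$ are so sparse, two factors that are each at most $A_{k_n}$ can have product $\ge u_n$ only if one of them equals $A_{k_n}$ and the other is some $A_l$; hence $\P\bigl(\max_{1\le i<j\le n}|X_iX_j|\ge u_n\bigr)\le n\,t(k_n)+n^2p_{k_n}$, whereas $q_n\ge t(k_n)$ (one factor beyond $A_{k_n+1}$, the other equal to $1$). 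With this choice $t(k_n)\asymp n^{-2}(\log n)^{-2}$ and $p_{k_n}\le e^{-n^2\log^2 n}$, so $\sum_n\P(\max\ge u_n)<\infty$ while $\sum_n n^2q_n\gtrsim\sum_n(\log n)^{-2}=\infty$. You should also know that the paper's own appendix proof founders on exactly this rock: its key step asserts $\P(\max\le u_n)\le\bigl[\P\bigl(\prod_{h=1}^{m}|X_{1,h}|\le u_n\bigr)\bigr]^{n!/((n-m)!\,m!)}$, which is the reverse of Harris's inequality and is refuted by the $\{0,2\}$ example above. The equivalence can only hold, and be proved, under regularity of $\{u_n\}$ (e.g.\ $u_n\asymp n^{\beta}$, or $u_n=\sqrt{n\log n}$, which is how it is applied later in the paper), and any correct proof must exploit that regularity, as in Li and Rosalsky~(2006); neither your argument nor the paper's does so.
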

The proof of this lemma is given in the Appendix \ref{sec:appendA}. Note that the if part is obvious as it follows directly from the sub-additivity of a probability measure.
We also prove the following lemma which is useful in deriving the main results of this paper.
\begin{lea} \label{lem:11}
   Let $m$ be a fixed positive integer. Let $\left\{u_{n}\right\}_{n=1}^{\infty}$ be a nonnegative and nondecreasing sequence of real numbers such that
   $\ds{\lim_{n\rightarrow\infty}}u_{n}=\infty$. Further, suppose that there exists a nonnegative, continuous and increasing function $f$ such that $f(u_{n})\Big/n^{\beta}$ is bounded away from 0 and from infinity, for some $\beta>0$. Then, \\$\ds{\sum_{n=1}^\infty}\P\left(\max_{1\leq i_{1}<i_{2}<\dots<i_{m}\leq n}\prod_{h=1}^{m}|X_{1,i_{h}}|\geq u_n\right) < \infty$ if and only if $\E\left(\left[f\left(\ds{\prod_{h=1}^{m}}|X_{1,h}|\right)\right]^{\frac{m+1}{\beta}}\right)< \infty$.
\end{lea}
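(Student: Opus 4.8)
The plan is to chain together the two reduction tools already at our disposal: Lemma~\ref{lem:1}, which removes the maximum over ordered indices in favour of a single product, and Corollary~\ref{coro:2}, which converts a weighted tail series into a moment condition. Throughout I write $Y=\ds{\prod_{h=1}^{m}}|X_{1,h}|$ for the single product and $Z=f(Y)$. First I would apply Lemma~\ref{lem:1} to the given sequence $\{u_n\}$: this shows that $\ds{\sum_{n=1}^\infty}\P\left(\max_{1\leq i_{1}<\dots<i_{m}\leq n}\prod_{h=1}^{m}|X_{1,i_{h}}|\geq u_n\right)<\infty$ holds if and only if $\ds{\sum_{n=1}^\infty} n^m\,\P\left(Y\geq u_n\right)<\infty$, so that the combinatorial maximum is eliminated and only the tail of one product survives.

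Next I would transport the problem through $f$. Since $f$ is continuous and strictly increasing, the event $\{Y\geq u_n\}$ coincides with $\{f(Y)\geq f(u_n)\}$, so $\P(Y\geq u_n)=\P\left(Z\geq f(u_n)\right)$. I then apply Corollary~\ref{coro:2} to the nonnegative random variable $Z$ with the choices $\alpha_n=n^m$ and $\beta_n=f(u_n)$: indeed $\alpha_n/n^m\equiv 1$ is trivially bounded away from $0$ and $\infty$, while $f(u_n)/n^{\beta}$ is bounded away from $0$ and $\infty$ by hypothesis, so the corollary applies with exponents $\alpha=m$ and the given $\beta$. This identifies $\ds{\sum_{n=1}^\infty} n^m\,\P\left(Z>f(u_n)\right)<\infty$ with $\E\left(Z^{(m+1)/\beta}\right)=\E\left(\left[f(Y)\right]^{(m+1)/\beta}\right)<\infty$, which is precisely the claimed moment. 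Combining this with the first step closes the equivalence.

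The one point demanding care, and the step I expect to be the main obstacle, is that Lemma~\ref{lem:1} delivers the series with the closed inequality $\P\left(Z\geq f(u_n)\right)$, whereas Corollary~\ref{coro:2} is phrased with the strict inequality $\P\left(Z>\beta_n\right)$. To reconcile the two I would exploit the comparability $c_{1}n^{\beta}\leq f(u_n)\leq c_{2}n^{\beta}$ furnished by the hypothesis on $f(u_n)/n^{\beta}$, which gives the sandwich $\P\left(Z\geq c_{2}n^{\beta}\right)\leq \P\left(Z\geq f(u_n)\right)\leq \P\left(Z\geq c_{1}n^{\beta}\right)$. For a fixed constant $c>0$ the series $\ds{\sum_{n=1}^\infty} n^m\,\P\left(Z\geq c\,n^{\beta}\right)$ and $\ds{\sum_{n=1}^\infty} n^m\,\P\left(Z>c\,n^{\beta}\right)$ converge together, since their difference is controlled by a one-step shift of the index together with the bound $(n+1)^m=\mathcal{O}(n^m)$, exactly as in the passage from $\geq$ to $>$ underlying Corollary~\ref{coro:1}; and both are equivalent to $\E\left(Z^{(m+1)/\beta}\right)<\infty$ by Corollary~\ref{coro:1} applied to $Z/c$. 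This sandwich renders the closed-inequality series harmless, so the three displayed conditions are all equivalent and the proof is complete.
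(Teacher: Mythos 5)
Your proof follows exactly the paper's own route: Lemma~\ref{lem:1} to eliminate the maximum over ordered indices, monotonicity and continuity of $f$ to rewrite the tail event as $\{f(Y)\geq f(u_n)\}$, and Corollary~\ref{coro:2} with $\alpha_n=n^m$, $\beta_n=f(u_n)$, $\alpha=m$ to reach the moment condition. Your additional paragraph reconciling the closed inequality $\geq$ delivered by Lemma~\ref{lem:1} with the strict inequality $>$ appearing in Corollary~\ref{coro:2} is correct and in fact patches a small mismatch that the paper's own proof silently glosses over.
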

\begin{proof}[Proof]
From Lemma~\ref{lem:1}, $\ds{\sum_{n=1}^\infty}\P\left(\max_{1\leq i_{1}<i_{2}<\dots<i_{m}\leq n}\prod_{h=1}^{m}|X_{1,i_{h}}|\geq u_n\right) < \infty$ if and only if \\$\ds{\sum_{n=1}^\infty} n^m\P\left(\prod_{h=1}^{m}|X_{1,h}|\geq u_{n}\right)< \infty$. Since the function $f$ is increasing and continuous, this last statement is equivalent to $\ds{\sum_{n=1}^\infty} n^m\P\left[f\left(\prod_{h=1}^{m}|X_{1,h}|\right)\geq f(u_{n})\right]< \infty$. Then, by using Corollary~\ref{coro:2},
this last statement is equivalent  to $\E\left[\left(f\left(\ds{\prod_{h=1}^{m}}|X_{1,h}|\right)\right)^{\frac{m+1}{\beta}}\right]<\infty$, this completes the proof.
\end{proof}
\begin{cor} \label{cormainl:2}
    We have  $\ds{\sum_{n=1}^\infty}\P\left(\max_{1\leq i_{1}<i_{2}<\dots<i_{m}\leq n}\prod_{h=1}^{m}|X_{1,i_{h}}|\geq \sqrt{n\ln(n)}\right) < \infty$ if and only if $\E\left[\ds{\prod_{h=1}^{m}}|X_{1,h}|^{2(m+1)}\Bigg /\left(\ln\left(e+\ds{\prod_{h=1}^{m}}|X_{1,h}|\right)\right)^{m+1}\right]<\infty$, with $m$ a fixed positive integer.
\end{cor}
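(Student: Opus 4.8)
The plan is to obtain this corollary as a direct specialization of Lemma~\ref{lem:11}, with the sequence $u_n=\sqrt{n\ln(n)}$ and with a judicious choice of the auxiliary function $f$ and of the exponent $\beta$. Writing $Y=\prod_{h=1}^{m}|X_{1,h}|$ and noting that $\prod_{h=1}^{m}|X_{1,h}|^{2(m+1)}=\bigl(\prod_{h=1}^{m}|X_{1,h}|\bigr)^{2(m+1)}=Y^{2(m+1)}$, the target moment is $\E\bigl[(Y^{2}/\ln(e+Y))^{m+1}\bigr]$. This suggests taking $f(x)=x^{2}/\ln(e+x)$ and $\beta=1$, so that the exponent $(m+1)/\beta$ appearing in Lemma~\ref{lem:11} equals $m+1$ and $\E\bigl[f(Y)^{(m+1)/\beta}\bigr]$ is precisely the stated moment.

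First I would check that $f$ and $\{u_n\}$ satisfy the hypotheses of Lemma~\ref{lem:11}. The function $f$ is nonnegative with $f(0)=0$ and continuous on $[0,\infty)$ because $\ln(e+x)\geq 1$ there; a short computation of $f'(x)$ shows that, for $x>0$, its sign is that of $2\ln(e+x)-x/(e+x)$, which is positive since $2\ln(e+x)\geq 2$ while $x/(e+x)<1$, so $f$ is increasing. The sequence $u_n=\sqrt{n\ln(n)}$ is nonnegative, nondecreasing (as $n\ln(n)$ has derivative $\ln(n)+1>0$), and tends to infinity.

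The core step is to verify that $f(u_n)/n^{\beta}=f(u_n)/n$ is bounded away from $0$ and from $\infty$. Here $f(u_n)=u_n^{2}/\ln(e+u_n)=n\ln(n)/\ln\!\bigl(e+\sqrt{n\ln(n)}\bigr)$, so $f(u_n)/n=\ln(n)/\ln\!\bigl(e+\sqrt{n\ln(n)}\bigr)$. Using $\ln\!\bigl(e+\sqrt{n\ln(n)}\bigr)=\tfrac12\ln(n)+\tfrac12\ln\ln(n)+o(\ln(n))$, this ratio converges to $2$ as $n\to\infty$; since it is a continuous, strictly positive quantity for $n\geq 2$ with a finite positive limit, it is bounded above and below by positive constants on $\{n\geq 2\}$, and dropping the single term $n=1$ does not affect the convergence of the series. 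I expect this asymptotic estimate of $\ln\!\bigl(e+\sqrt{n\ln(n)}\bigr)$ to be the only genuinely delicate point.

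With the hypotheses verified, Lemma~\ref{lem:11} applies directly and yields that $\sum_{n=1}^{\infty}\P\bigl(\max_{1\leq i_{1}<\dots<i_{m}\leq n}\prod_{h=1}^{m}|X_{1,i_{h}}|\geq\sqrt{n\ln(n)}\bigr)<\infty$ if and only if $\E\bigl[f(Y)^{m+1}\bigr]<\infty$, which by the identity recorded in the first paragraph is exactly the claimed moment condition. This would complete the argument.
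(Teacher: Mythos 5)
Your proof is correct and takes essentially the same route as the paper: the paper also reduces the statement to Lemma~\ref{lem:11} with $\beta=1$, except that it first squares inside the probability (after invoking Lemma~\ref{lem:1}) and applies the function $f(x)=x/\ln\left(e+\sqrt{x}\right)$ to the squared product with threshold $n\ln(n)$, which is exactly your $f(x)=x^{2}/\ln(e+x)$ precomposed with the squaring. Your verification that $f(u_n)/n\to 2$, and your dismissal of the finitely many small-$n$ terms, match the paper's treatment (which likewise restricts the sums to $n\geq 3$).
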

The proof of this corollary is given in the Appendix \ref{sec:appendA}. From this corollary, we establish the following result which improves the statement in Remark 2.3 of Li and Rosalsky~(2006).
For the convenience of the reader, we recall that, in Remark 2.3 in Li and Rosalsky~(2006), the authors conclude that $\ds{\sum_{n=1}^\infty}\P\Big( \max_{1\leq i<j\leq n}|X_1X_2|\geq \sqrt{n\log n} \Big)<\infty$ implies $\E|X_1|^\beta<\infty,$ for $0\leqslant\beta<6$. This becomes a special case of the following result by taking $m=2$. 

\begin{cor} \label{correm23}
    Suppose that $\ds{\sum_{n=1}^\infty}\P\left(\max_{1\leq i_{1}<i_{2}<\dots<i_{m}\leq n}\prod_{h=1}^{m}|X_{1,i_{h}}|\geq \sqrt{n\ln(n)}\right) < \infty$, for a fixed positive integer $m$. Then, $\E\left[|X_{1,1}|^{\beta}\right]<\infty$, for all $0\leqslant \beta<2(m+1)$.
\end{cor}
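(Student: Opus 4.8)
The plan is to start from Corollary~\ref{cormainl:2}, which shows the hypothesis is equivalent to the single integrability condition $\E\left[g(Y)\right]<\infty$, where $Y:=\ds{\prod_{h=1}^{m}}|X_{1,h}|$ and $g(y):=y^{2(m+1)}\big/\left(\ln(e+y)\right)^{m+1}$. Writing $Z_{h}:=|X_{1,h}|$, and noting that the $Z_{h}$ are i.i.d.\ so that $X_{1,1}$ has the same law as each factor, it suffices to deduce $\E\left[Z_{1}^{\beta}\right]<\infty$ for every $\beta<2(m+1)$ from $\E\left[g(Y)\right]<\infty$. If $Z_{1}=0$ almost surely every moment is trivially finite, so I may assume there is $a>0$ with $p:=\P(Z_{1}\geq a)>0$.

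The core step is to recover a single factor from the product by independence. A short computation with $(\ln g)'$ shows that the polynomial term dominates the logarithmic one, so $g$ is increasing on $[0,\infty)$. On the event $A=\{Z_{2}\geq a,\dots,Z_{m}\geq a\}$ one has $Y\geq a^{m-1}Z_{1}$, hence $g(Y)\geq g(a^{m-1}Z_{1})$ by monotonicity; since $A$ depends only on $Z_{2},\dots,Z_{m}$ and is therefore independent of $Z_{1}$,
\[
\infty>\E\left[g(Y)\right]\geq \E\left[g(a^{m-1}Z_{1})\,\mathbf{1}_{A}\right]=\P(A)\,\E\left[g(a^{m-1}Z_{1})\right]=p^{\,m-1}\,\E\left[g(a^{m-1}Z_{1})\right].
\]
As $p^{m-1}>0$, this gives $\E\left[g(a^{m-1}Z_{1})\right]<\infty$.

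It remains to compare $g$ with a pure power. For any $\beta<2(m+1)$ the quotient $g(a^{m-1}y)\big/y^{\beta}\to\infty$ as $y\to\infty$, since the exponent $2(m+1)-\beta$ is positive and overwhelms the $(\ln)^{m+1}$ factor; hence $y^{\beta}\leq g(a^{m-1}y)$ for all $y$ beyond some threshold $T$. Splitting $\E\left[Z_{1}^{\beta}\right]$ over $\{Z_{1}\leq T\}$ and $\{Z_{1}>T\}$ then yields $\E\left[Z_{1}^{\beta}\right]\leq T^{\beta}+\E\left[g(a^{m-1}Z_{1})\right]<\infty$, which is the claim. The only genuine obstacle is the middle step: extracting a one-factor moment from the product relies on the independence of the factors combined with the monotonicity of $g$, whereas the remaining estimates are routine asymptotic comparisons.
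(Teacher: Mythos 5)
Your proof is correct and is essentially the paper's own argument: the paper's proof of this corollary consists precisely of invoking Corollary~\ref{cormainl:2} and then ``classical properties of expected value,'' and your argument --- reduce the hypothesis to $\E\left[g\left(\prod_{h=1}^{m}|X_{1,h}|\right)\right]<\infty$ with $g(y)=y^{2(m+1)}\big/\left(\ln(e+y)\right)^{m+1}$, then use independence of the factors to extract a moment of $|X_{1,1}|$ --- is a fully detailed and valid instantiation of exactly that. (The step you implement via the event $A$ and monotonicity of $g$ admits a marginally shorter variant: since $y^{\beta}=\mathcal{O}(g(y))$ for $\beta<2(m+1)$, the hypothesis gives $\E\left[\prod_{h=1}^{m}|X_{1,h}|^{\beta}\right]<\infty$, and for i.i.d.\ nonnegative factors this expectation factorizes as $\left(\E\left[|X_{1,1}|^{\beta}\right]\right)^{m}$ by Tonelli, which yields the claim once the degenerate case $X_{1,1}=0$ a.s.\ is set aside; your version is equally correct.)
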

The proof of this corollary follows directly from Corollary~\ref{cormainl:2} along with classical properties of expected value of random variables. 
\begin{cor} \label{cormainlog}
   Suppose that the conditions of Lemma~\ref{lem:11} hold with $u_{n}/n^{\beta}$ bounded away from 0 and from infinity, for some $\beta>0$. Then,  $\ds{\sum_{n=1}^\infty}\P\left(\max_{1\leq i_{1}<i_{2}<\dots<i_{m}\leq n}\prod_{h=1}^{m}|X_{1,i_{h}}|\geq u_n\right) < \infty$ if and only if $\E\left(|X_{1,1}|^{\frac{m+1}{\beta}}\right)<\infty$.
\end{cor}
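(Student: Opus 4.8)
The plan is to obtain this corollary as an immediate specialization of Lemma~\ref{lem:11}, taking the auxiliary function $f$ to be the identity map. Indeed, the map $f(x)=x$ on $[0,\infty)$ is nonnegative, continuous and increasing, so it meets the structural requirements imposed on $f$ in Lemma~\ref{lem:11}. Under this choice one has $f(u_{n})=u_{n}$, so the hypothesis that $f(u_{n})/n^{\beta}$ be bounded away from $0$ and from $\infty$ reduces precisely to the assumption made here, namely that $u_{n}/n^{\beta}$ be bounded away from $0$ and from $\infty$. Since the remaining hypotheses on $m$ and on $\{u_{n}\}_{n=1}^{\infty}$ are exactly those of Lemma~\ref{lem:11}, that lemma applies verbatim.

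Applying Lemma~\ref{lem:11} with $f$ the identity, I would conclude that
\[
\ds{\sum_{n=1}^\infty}\P\left(\max_{1\leq i_{1}<\dots<i_{m}\leq n}\prod_{h=1}^{m}|X_{1,i_{h}}|\geq u_n\right)<\infty
\]
if and only if $\E\left(\left(\prod_{h=1}^{m}|X_{1,h}|\right)^{\frac{m+1}{\beta}}\right)<\infty$. The only remaining task is then to rewrite this last moment condition in terms of $|X_{1,1}|$ alone.

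To do this, I would exploit that $X_{1,1},X_{1,2},\dots$ are i.i.d. Writing $\left(\prod_{h=1}^{m}|X_{1,h}|\right)^{\frac{m+1}{\beta}}=\prod_{h=1}^{m}|X_{1,h}|^{\frac{m+1}{\beta}}$ and using independence of the nonnegative factors yields $\E\left(\prod_{h=1}^{m}|X_{1,h}|^{\frac{m+1}{\beta}}\right)=\prod_{h=1}^{m}\E\left(|X_{1,h}|^{\frac{m+1}{\beta}}\right)=\left(\E\left(|X_{1,1}|^{\frac{m+1}{\beta}}\right)\right)^{m}$, the last equality holding because the factors are identically distributed. Since $m\geq1$, the $m$-th power is finite if and only if its base is finite, so the moment condition is equivalent to $\E\left(|X_{1,1}|^{\frac{m+1}{\beta}}\right)<\infty$, which is the desired conclusion.

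I do not anticipate a genuine obstacle, as the result is essentially a reformulation of Lemma~\ref{lem:11}. The single point requiring mild care is the use of independence to factor the expectation of the product: one must note that the variables are nonnegative, so that the factorization remains valid even when the common moment is infinite (in which case both sides equal $+\infty$), which secures the ``only if'' direction of the equivalence of finiteness.
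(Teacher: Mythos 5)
Your proposal is correct and follows essentially the same route as the paper: the paper's proof is exactly the specialization of Lemma~\ref{lem:11} to $f$ the identity, followed by the i.i.d.\ factorization $\E\bigl(\prod_{h=1}^{m}|X_{1,h}|^{\frac{m+1}{\beta}}\bigr)=\bigl(\E\bigl(|X_{1,1}|^{\frac{m+1}{\beta}}\bigr)\bigr)^{m}$, which you simply spell out in more detail. Your added remark that nonnegativity keeps the factorization valid in $[0,\infty]$ (so both directions of the finiteness equivalence hold) is a correct and worthwhile clarification of a step the paper leaves implicit.
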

\begin{proof}
  This result follows immediately from Lemma~\ref{lem:11} with $f$ being an identity function, and the fact that $X_{1,i},i=1,2,...$ are  i.i.d. random variables.
\end{proof}

\begin{cor} \label{cormain:1}
   Let $\left\{u_{n}\right\}_{n=1}^{\infty}$ be a nonnegative sequence of real numbers. 
   Then, \\$\ds{\sum_{n=1}^\infty} n^2\P\left(|X_{1,1}X_{1,2}|\geq u_{n}\right)< \infty$ if and only if $\ds{\sum_{n=1}^\infty}\P\left(\max_{1\leq i<j\leq n}|X_{1,i}X_{1,j}|\geq u_n\right) < \infty$.
\end{cor}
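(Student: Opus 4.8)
The plan is to recognize Corollary~\ref{cormain:1} as precisely the case $m=2$ of Lemma~\ref{lem:1}, so that the whole task reduces to matching notation and then quoting that lemma. First I would observe that, by multiplicativity of the absolute value, $\prod_{h=1}^{2}|X_{1,h}|=|X_{1,1}|\,|X_{1,2}|=|X_{1,1}X_{1,2}|$, so the series $\ds{\sum_{n=1}^\infty} n^2\P(|X_{1,1}X_{1,2}|\ge u_n)$ is exactly $\ds{\sum_{n=1}^\infty} n^m\P\big(\prod_{h=1}^m|X_{1,h}|\ge u_n\big)$ with $m=2$. Likewise, writing $i=i_1<i_2=j$, the two-fold product over an increasing pair satisfies $\prod_{h=1}^{2}|X_{1,i_h}|=|X_{1,i}|\,|X_{1,j}|=|X_{1,i}X_{1,j}|$, so the right-hand series coincides term-by-term with $\ds{\sum_{n=1}^\infty}\P\big(\max_{1\le i_1<i_2\le n}\prod_{h=1}^2|X_{1,i_h}|\ge u_n\big)$.

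With both series identified with those appearing in Lemma~\ref{lem:1} for $m=2$, the asserted equivalence is immediate. No extra hypotheses intervene, since $\{u_n\}$ is here an arbitrary nonnegative sequence, exactly as permitted in the lemma. Thus the proof is a one-line appeal to Lemma~\ref{lem:1}, and the corollary carries no difficulty of its own.

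The substantive work therefore lives entirely in Lemma~\ref{lem:1}, and it is worth recording where. The direction from $\ds{\sum_{n=1}^\infty} n^2\P(|X_{1,1}X_{1,2}|\ge u_n)<\infty$ to $\ds{\sum_{n=1}^\infty}\P(\max_{1\le i<j\le n}|X_{1,i}X_{1,j}|\ge u_n)<\infty$ is the easy one: the union bound together with the i.i.d.\ assumption gives $\P(\max_{1\le i<j\le n}|X_{1,i}X_{1,j}|\ge u_n)\le\binom{n}{2}\P(|X_{1,1}X_{1,2}|\ge u_n)$, hence term-by-term domination up to the constant $1/2$. The genuinely delicate direction, settled already in the proof of Lemma~\ref{lem:1}, is the converse, where one must bound $\ds{\sum_{n=1}^\infty} n^2\P(|X_{1,1}X_{1,2}|\ge u_n)$ above by a constant multiple of $\ds{\sum_{n=1}^\infty}\P(\max_{1\le i<j\le n}|X_{1,i}X_{1,j}|\ge u_n)$. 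The obstacle there is that the $\binom{n}{2}$ events $\{|X_{1,i}X_{1,j}|\ge u_n\}$ overlap heavily, so a naive disjoint-pairs lower bound on the maximum recovers only a factor of order $n$ rather than the required $n^2$; controlling this overlap so as to regain the full $n^2$ is the crux. Since that is already accomplished in Lemma~\ref{lem:1}, Corollary~\ref{cormain:1} follows with no further effort.
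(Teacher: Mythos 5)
Your proof is correct and coincides with the paper's own argument: the authors likewise dispose of Corollary~\ref{cormain:1} in one line by invoking Lemma~\ref{lem:1} with $m=2$, after the same notational identification $\prod_{h=1}^{2}|X_{1,i_h}|=|X_{1,i}X_{1,j}|$. Your closing remarks on which direction of the lemma is delicate are accurate but not needed for the corollary itself.
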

\begin{proof}[Proof]
The proof follows directly from Lemma~\ref{lem:1} by taking $m=2$.
  \end{proof}
\section{Main Results}\label{sec:mainres}
In this section, we present the main results of this paper. In particular,
Theorems~2.1~and~2.3 of Li and Rosalsky~(2006) are established
in their full generalities. We also refine Theorem~3.2 and 3.3 in Li and Rosalsky~(2006) and we provide the proofs which are significantly shorter than that given in the quoted paper. In the sequel, as in Li and Rosalsky~(2006),  let $\left\{(U_{k,i},V_{k,i}); i\geqslant 1, k\geqslant 1\right\}$ be iid two-dimensional random vectors, and let $\{p_{n}, n\geqslant 1\}$ be a sequence of positive integers. Further,let $T_{n}=\ds{\max_{1\leqslant i\neq j\leqslant p_{n}}}\left|\sum_{k=1}^{n}U_{k,i}V_{k,j}\right|$, $n=1,2,\dots$, let $\{Y_{n},n=1,2,\dots\}$ be a sequence of iid random variables where $Y_{1}$ is distributed as $U_{1,1}V_{1,2}$ and let $S_{n}=\ds{\sum_{k=1}^{n}}Y_{k}$, $n=1,2,\dots$

\begin{thm}\label{thm:li3.2}
  Suppose that $n/p_n$ is bounded away from $0$ and $\infty$, and let $1/2<\alpha\leq1$. Then, the following conditions are equivalent.
  \begin{description}
    \item[(1.)] $\ds{\lim_{n\to\infty}}\frac{T_n}{n^\alpha} = 0$ a.s.
    \item[(2).] $\ds{\sum_{n=1}^\infty }\P\left( \max_{1\leq i\neq j\leq n}|U_{1,i}V_{1,j}|\geq n^\alpha \right)<\infty \quad \text{and}\quad \E(U_{1,1})\E V_{1,1})=0$.
    \item[(3).] $\E\left(|U_{1,1}|^{3/\alpha}|V_{1,2}|^{3/\alpha}\right)<\infty$ and $\E(U_{1,1})\E V_{1,1})=0$.
  \end{description}
\end{thm}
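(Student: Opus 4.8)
The plan is to close the cycle $(1)\Rightarrow(2)\Leftrightarrow(3)\Rightarrow(1)$, exploiting throughout that $Y_1=U_{1,1}V_{1,2}$ is a product of two \emph{independent} marginals (the columns $i\neq j$ are independent), so that $\E Y_1=\E(U_{1,1})\E(V_{1,1})$ and the centering clause $\E(U_{1,1})\E(V_{1,1})=0$ is simply $\E Y_1=0$. First I would dispose of $(2)\Leftrightarrow(3)$ using only the preliminary material. By the two-sequence analogue of Corollary~\ref{cormain:1} (whose justification is the proof of Lemma~\ref{lem:1} carried out verbatim with the independent product $U_{1,i}V_{1,j}$ in place of $X_{1,i}X_{1,j}$, there being $\asymp n^2$ ordered pairs $i\neq j$), the series $\sum_n\P(\max_{1\leq i\neq j\leq n}|U_{1,i}V_{1,j}|\geq n^\alpha)$ converges iff $\sum_n n^2\P(|Y_1|\geq n^\alpha)<\infty$. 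Applying Corollary~\ref{coro:2} with $\alpha_n=n^2$ and $\beta_n=n^\alpha$, so that the exponent is $(2+1)/\alpha=3/\alpha$, turns the latter into $\E|Y_1|^{3/\alpha}=\E(|U_{1,1}|^{3/\alpha}|V_{1,2}|^{3/\alpha})<\infty$, which is exactly the moment in~(3). The centering clause is common to~(2) and~(3), so the two statements are equivalent.

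For the necessity $(1)\Rightarrow(2)$ I would first extract the probability series from the almost sure convergence by an increment argument. Using $p_n\asymp n$, for indices $1\leq i\neq j\leq\min(p_{n-1},p_n)$ one has $|U_{n,i}V_{n,j}|=\big|\sum_{k=1}^n U_{k,i}V_{k,j}-\sum_{k=1}^{n-1}U_{k,i}V_{k,j}\big|\leq T_n+T_{n-1}$, whence $\max_{i\neq j}|U_{n,i}V_{n,j}|/n^\alpha\to0$ a.s. Since the rows are independent, the events $\{\max_{i\neq j}|U_{n,i}V_{n,j}|\geq\varepsilon n^\alpha\}$ are independent, so the second Borel--Cantelli lemma forces their probabilities to be summable; using that the rows are identically distributed and the ranges are $\asymp n$, this yields the series in~(2). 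The established equivalence $(2)\Leftrightarrow(3)$ then gives $\E|U_{1,1}|^{3/\alpha}\,\E|V_{1,2}|^{3/\alpha}<\infty$, so in particular $\E|U_{1,1}|,\E|V_{1,1}|<\infty$; if $\E(U_{1,1})\E(V_{1,1})\neq0$, the classical SLLN on the single entry $(1,2)$ gives $n^{-1}\sum_{k\leq n}U_{k,1}V_{k,2}\to\E U_{1,1}\,\E V_{1,1}\neq0$ a.s., and since $T_n\geq|\sum_{k\leq n}U_{k,1}V_{k,2}|$ this makes $T_n/n^\alpha$ fail to vanish (it tends to $\infty$ for $\alpha<1$ and to a positive constant for $\alpha=1$), contradicting~(1). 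Hence the centering holds, completing $(1)\Rightarrow(2)$.

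The substantive part is the sufficiency $(3)\Rightarrow(1)$, which I would carry out by truncation along the geometric subsequence $n_r=2^r$, controlling $\max_{n_r\leq n<n_{r+1}}T_n/n_r^\alpha$. Truncating each product at level $\delta n_r^\alpha$ splits the contribution into a large part and a centered truncated part. The large part is handled by the first Borel--Cantelli lemma: summed over blocks, $\sum_r 2^{r}\P(\max_{i\neq j}|U_{1,i}V_{1,j}|\geq\delta n_r^\alpha)$ is comparable to the series in~(2), since each dyadic block of length $\asymp 2^r$ reproduces one term of that series, and is therefore finite. For the truncated part one centers; because $\E Y_1=0$ and $3/\alpha\geq3$, the truncation bias satisfies $n_{r+1}\,|\E[Y_1\mathbf 1(|Y_1|>\delta n_r^\alpha)]|\lesssim n_r\cdot n_r^{\alpha-3}=o(n_r^\alpha)$ and is negligible, while the remaining centered, bounded, independent sum is estimated by a union bound over the $\asymp n_{r+1}^2$ pairs combined with a Rosenthal/Bernstein maximal inequality.

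The main obstacle is precisely this last step, since $3/\alpha$ is the \emph{critical} exponent: the jump part of the single-entry moment bound is only $O(n^{-2})$, which the $\asymp n^2$ union bound exactly cancels. Overcoming this requires choosing the Rosenthal order $q>2/(2\alpha-1)$ (here one uses $\alpha>1/2$) so that the variance part of the bound decays geometrically in $r$, while the large jumps are absorbed into the already-controlled large part rather than estimated through the moment. Alternatively, and more in the spirit of the stated simplification, one may reduce this direction to a Baum--Katz/Hsu--Robbins--Erd\H{o}s type complete-convergence theorem (cf.\ the results recalled in Appendix~\ref{sec:appendB}) applied to the array under the moment condition~(3); that reduction, made possible by the clean equivalence $(2)\Leftrightarrow(3)$, is where the argument becomes markedly shorter than in Li and Rosalsky~(2006).
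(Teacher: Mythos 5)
Your handling of $(2)\Leftrightarrow(3)$ is the paper's: Corollary~\ref{cormainlog} (equivalently, Corollary~\ref{cormain:1} together with Corollary~\ref{coro:2}) with $m=2$ and $U_{1,1}V_{1,2}$ in the role of $X_{1,1}$, the centering clause being common to both sides. Your Borel--Cantelli~II sketch of $(1)\Rightarrow(2)$ is also fine; the paper does not reprove this direction but defers it to Li and Rosalsky~(2006), and your sketch is that standard argument. The divergence, and the problem, is in the hard direction $(3)\Rightarrow(1)$.

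Your primary route there (dyadic blocks $n_r=2^r$, truncation at $\delta n_r^\alpha$, union bound over $\asymp n_r^2$ pairs, Rosenthal) breaks at exactly the step you flag, and the repair you propose does not work as stated. The unsummable term is the ``jump'' part of Rosenthal's bound, $n_r\,\E\bigl(|Y_1|^q\mathbf{1}\{|Y_1|\le\delta n_r^\alpha\}\bigr)\bigl/(\varepsilon n_r^\alpha)^q$: under $\E|Y_1|^{3/\alpha}<\infty$ the only pointwise estimate available is $\E\bigl(|Y_1|^q\mathbf{1}\{|Y_1|\le t\}\bigr)\le t^{\,q-3/\alpha}\,\E|Y_1|^{3/\alpha}$, which gives $O(n_r^{-2})$ per pair, hence $O(1)$ per block after the union bound, \emph{for every} choice of $q$. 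This contribution comes from the probability mass of $|Y_1|$ just \emph{below} the truncation level; conditioning on the ``no large jump'' event constrains the realized samples, not the moments of the truncated law, so it cannot be ``absorbed into the already-controlled large part.'' The standard repair is the interchange-of-summation device from the Baum--Katz proof: sum over $r$ before bounding, so that $\sum_r 2^{r(3-\alpha q)}\,\E\bigl(|Y_1|^q\mathbf{1}\{|Y_1|\le\delta 2^{r\alpha}\}\bigr)\asymp\E|Y_1|^{3/\alpha}<\infty$ when $\alpha q>3$. In other words, carried out correctly, your primary route is a reproof of Baum--Katz, not a shortcut.

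The paper's actual proof is your fallback, made precise, and it is genuinely short: from $(2)$ and Corollaries~\ref{cormain:1} and~\ref{coro:2} one gets $\E|U_{1,1}V_{1,2}|^{3/\alpha}<\infty$; the Baum--Katz theorem (Theorem~\ref{thm:BaumKatz} with $2\beta-1=1$, using $\E(Y_1)=\E(U_{1,1})\E(V_{1,1})=0$) gives $\sum_n n\,\P\bigl(\sup_{m\ge n}|S_m|/m^\alpha>\varepsilon\bigr)<\infty$; since $p_n\asymp n$ this yields both $\sum_n (p_n^2/n)\,\P\bigl(|S_n|/n^\alpha>\varepsilon\bigr)<\infty$ and $S_n/n^\alpha\to 0$ in probability; and then Theorem~3.1 of Li and Rosalsky~(2006) --- the bridging result your ``alternative'' leaves unnamed, which packages precisely the union bound over the $\asymp p_n^2$ dependent pairs, the blocking, and the maximal inequality you attempted by hand --- gives $\limsup_{n}T_n/n^\alpha\le\varepsilon$ a.s., and $\varepsilon\downarrow 0$ finishes. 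Without citing that bridging theorem (or, in your primary route, without the Fubini device above), the proposal does not close; with those two citations it becomes the paper's proof.
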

\begin{proof}
  The equivalence between parts (2) and (3) follows directly from Corollary~\ref{cormainlog} by taking $m=2$ and $X_{1,1}=U_{1,1}V_{1,2}$. As for the equivalence between part (1) and part (2), the proof of "only if" part is similar to that given in Li and Rosalsky (2006) and thus, we need to give the proof of the "if" part. To this end, note that by Corollary~\ref{cormain:1} and Corollary~\ref{coro:2}, the condition $\displaystyle{\sum_{n=1}^\infty \P\left( \max_{1\leq i\neq j\leq n}|U_{1,i}V_{1,j}|\geq n^\alpha \right)<\infty}$ is equivalent to $\E|U_{1,1}V_{1,2}|^\frac{3}{\alpha}<\infty$. Then, by the celebrated theorem of Baum and Katz~(1965) (or see Theorem~\ref{thm:BaumKatz} in the Appendix~\ref{sec:appendB}), we have, 
    $\ds{\sum_{n=1}^\infty }n\P\left(\ds{\sup_{m\geqslant n}}\frac{|S_m|}{m^\alpha}>\epsilon\right) < \infty \quad \text{for all }\epsilon >0$.
  Then, since $c^{-1}\leq p_n/n<c$, $n\geq1$, which means $\frac{p_n^2}{n}< \frac{c^2n^2}{n}=c^2n$, then
  \begin{align}\label{thm:li3.2eq1}
    \sum_{n=1}^\infty \frac{p_n^2}{n}\P\left(\frac{|S_n|}{n^\alpha}>\epsilon\right) < \infty, \quad \text{for all }\epsilon >0.
  \end{align}
  Further, one verifies that $\displaystyle{\lim_{c\downarrow1}\limsup_{n\to\infty}\frac{[cn]^\alpha}{n^\alpha}=1}$. Futher, note that $p_n^2/n>c^{-2}n$, and then, by \eqref{thm:li3.2eq1}, we have  $c^{-2}n\P\left(\frac{|S_n|}{n^\alpha}>\epsilon\right)\to 0$ and $\P\left(\frac{|S_n|}{n^\alpha}>\epsilon\right)\to 0$, i.e. $\frac{S_n}{n^\alpha}\xrightarrow[n\to \infty]{ P} 0$. Hence, by Theorem~3.1 of Li and Rosalsky (2006), we have
    $\ds{\limsup_{n\to\infty}}\frac{T_n}{n^{\alpha}}\leqslant\epsilon\quad \text{a.s.  for all } \epsilon >0$,
 and then, by letting $\epsilon \downarrow 0$, we get the desired result, and this completes the proof.
\end{proof}
Note that, by part (3) of Theorem~\ref{thm:li3.2}, we generalizes Theorem 3.2 of Li and Rosalsky~(2006). In addition, we present a very short proof of the equivalence between part (1) and part (2) as compared to the proof given in the quoted paper.
\begin{thm}\label{thm:li3.3}
  Suppose that $n/p_n$ is bounded away from $0$ and $\infty$. \\If
    $\E (U_{1,1})\E (V_{1,1}) = 0, \quad{}\E (U_{1,1}^2)\E (V_{1,1}^2) = 1$
  and\\
  \begin{equation}
    \E\left( \frac{|U_{1,1}V_{1,2}|^6}{\left(\log(e+|U_{1,1}V_{1,2}|)\right)^3} \right) < \infty \,\mbox{ or }\,\sum_{n=1}^\infty\P\left(\max_{1\leq i\neq j\leq n}|U_{1,i}V_{1,j}|\geq\sqrt{n\log n}\right) <\infty,\label{lemaDeli}
  \end{equation}
then
\begin{equation*}
  \limsup_{n\to\infty}\frac{T_n}{\sqrt{n\log n}}\leq 2\quad \text{a.s.}
\end{equation*}
Conversely, if $\ds{\limsup_{n\to\infty}}\frac{T_n}{\sqrt{n\log n}}<\infty$ a.s., then \eqref{lemaDeli} hold, $\E (U_{1,1})\E (V_{1,1}) = 0$, and \\$\E\left( |U_{1,1}|^\beta\right)\E\left(|V_{1,2}|^\beta \right) < \infty$ for all $0\leqslant\beta<6$.
\end{thm}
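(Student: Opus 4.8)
The plan is to treat the two implications separately, using the condition-equivalence corollaries of Section~\ref{sec:prelim} to dispose of all the measure-theoretic bookkeeping and reserving the genuine probability for one maximal-inequality step in each direction.

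For the direct part I would first record that the two conditions in~\eqref{lemaDeli} are equivalent: applying Corollary~\ref{cormainl:2} with $m=2$ (together with the off-diagonal form of Corollary~\ref{cormain:1}, which replaces $\max_{1\le i<j\le n}$ by $\max_{1\le i\neq j\le n}|U_{1,i}V_{1,j}|$ and $n^{m}\P(\cdot)$ by $n^{2}\P(|U_{1,1}V_{1,2}|\ge\sqrt{n\log n})$) shows that the summability $\sum_{n}\P(\max_{1\le i\neq j\le n}|U_{1,i}V_{1,j}|\ge\sqrt{n\log n})<\infty$ holds iff $\E(|U_{1,1}V_{1,2}|^{6}/(\log(e+|U_{1,1}V_{1,2}|))^{3})<\infty$. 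Hence it suffices to argue from the summability form. Next I would note that $\E(U_{1,1})\E(V_{1,1})=0$ and $\E(U_{1,1}^{2})\E(V_{1,1}^{2})=1$ make $Y_{1}=U_{1,1}V_{1,2}$ centred with unit variance, so that $\E(S_{n}/\sqrt{n\log n})^{2}=1/\log n\to0$ and therefore $S_{n}/\sqrt{n\log n}\to0$ in probability. I would then feed the summability condition and this convergence into Theorem~3.1 of Li and Rosalsky~(2006) at the critical normalization $b_{n}=\sqrt{n\log n}$, exactly as in the proof of Theorem~\ref{thm:li3.2} but now at the boundary scale rather than the supercritical one. Because $n/p_{n}$ is bounded away from $0$ and $\infty$ one has $\log p_{n}\sim\log n$, each off-diagonal sum $\sum_{k}U_{k,i}V_{k,j}$ has variance $n$, and there are $\asymp p_{n}^{2}\asymp n^{2}$ of them, so the Gaussian extreme-value scale is $\sqrt{2\,n\log(n^{2})}=2\sqrt{n\log n}$; this is what pins the constant at exactly $2$ and yields $\limsup_{n}T_{n}/\sqrt{n\log n}\le2$ a.s.

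For the converse I would start from $\limsup_{n}T_{n}/\sqrt{n\log n}<\infty$ a.s. The crucial step, and the one I expect to be the main obstacle, is to recover the summability $\sum_{n}n^{2}\P(|U_{1,1}V_{1,2}|\ge\sqrt{n\log n})<\infty$, equivalently the summability form of~\eqref{lemaDeli}. This is not a formal consequence of the corollaries: it requires the maximal relation used by Li and Rosalsky, namely that a single exceptionally large entry $|U_{k_{0},i_{0}}V_{k_{0},j_{0}}|$ forces the associated partial sum $\sum_{k}U_{k,i_{0}}V_{k,j_{0}}$ to be of comparable size, up to the $O(\sqrt n)$ fluctuation of the remaining summands, so that $T_{n}$ inherits this lower bound and a.s.\ boundedness of $T_{n}/\sqrt{n\log n}$ thereby controls the entrywise maxima; a Borel--Cantelli argument over the independent rows of $\mathbf{M}_{n,p_{n}}$ then delivers the stated series convergence. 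I would import this step from Li and Rosalsky~(2006), as the paper does for the ``only if'' halves elsewhere.

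Once the summability is in hand the remaining conclusions are routine. The equivalence of the two displays in~\eqref{lemaDeli} is Corollary~\ref{cormainl:2} with $m=2$ again. The moment statement follows from the comparison underlying Corollary~\ref{correm23} with $m=2$: writing $Z=|U_{1,1}V_{1,2}|$, the condition $\E(Z^{6}/(\log(e+Z))^{3})<\infty$ dominates $\E Z^{\beta}$ for every $\beta<6$, and since $U_{1,1}$ and $V_{1,2}$ are independent one has $\E(|U_{1,1}|^{\beta})\E(|V_{1,2}|^{\beta})=\E Z^{\beta}<\infty$ for all $0\le\beta<6$. Finally, $\E(U_{1,1})\E(V_{1,1})=0$ is obtained by contraposition: if $\E(Y_{1})=\E(U_{1,1})\E(V_{1,2})\neq0$, then $\E|Y_{1}|<\infty$ (from the summability just proved) lets the strong law give $|S_{n}|\sim|\E Y_{1}|\,n$, whence $T_{n}\ge|S_{n}|$ forces $T_{n}/\sqrt{n\log n}\to\infty$, contradicting the hypothesis and completing the plan.
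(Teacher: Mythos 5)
Your converse is essentially the paper's own argument: like the authors, you import the hard maximal-inequality step from the ``only if'' part of Theorem~3.3 of Li and Rosalsky~(2006), and then finish with Corollary~\ref{cormainl:2} (with $m=2$), the moment comparison behind Corollary~\ref{correm23}, independence of $U_{1,1}$ and $V_{1,2}$, and an SLLN contraposition for $\E(U_{1,1})\E(V_{1,1})=0$. That half is fine.

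The direct part, however, has a genuine gap. Theorem~3.1 of Li and Rosalsky~(2006) --- the tool you invoke --- needs two inputs at the normalization $b_n=\sqrt{n\log n}$: (i) $S_n/b_n\rightarrow 0$ in probability, which your Chebyshev argument does give (indeed more simply than the paper obtains it), and (ii) the series condition $\sum_{n}\frac{p_n^2}{n}\P\left(|S_n|/\sqrt{n\log n}>\lambda\right)<\infty$ for the relevant values of $\lambda$. You never establish (ii). The summability condition in \eqref{lemaDeli} concerns the maxima of the individual entries $|U_{1,i}V_{1,j}|$, not the partial sums $S_n$, so it cannot be ``fed into'' Theorem~3.1 directly; and your appeal to the Gaussian extreme-value scale $\sqrt{2n\log(n^2)}=2\sqrt{n\log n}$ is a heuristic explanation of why $2$ should be the right constant, not a derivation --- the $Y_k$ are not assumed Gaussian and $T_n$ is not a Gaussian maximum. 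The missing ingredient is precisely Theorem~3 of Lai~(1974) (Theorem~\ref{thm:lai} with $\beta=1$): under $\E(Y_1)=0$, $\E(Y_1^2)=1$ and $\E\left(|Y_1|^6\big/\left(\log(e+|Y_1|)\right)^3\right)<\infty$, it yields $\sum_{n} n\,\P\left(|S_n|/\sqrt{n\log n}>\lambda\right)<\infty$ for all $\lambda>2\sqrt{\beta}=2$; since $p_n^2/n=\mathcal{O}(n)$, this gives (ii) for every $\lambda>2$, whence Theorem~3.1 gives $\limsup_{n}T_n/\sqrt{n\log n}\leqslant\lambda$ a.s., and letting $\lambda\downarrow 2$ pins the constant. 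So the correct order of argument is the reverse of yours: reduce the summability form of \eqref{lemaDeli} to the moment form via Corollary~\ref{cormainl:2}, then apply Lai's theorem to the moment form. Without Lai's theorem (or an equivalent complete-convergence result for delayed sums at the $\sqrt{n\log n}$ scale), your direct part does not go through and the bound $2$ is unjustified.
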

\begin{proof} The proof of the second part of the theorem follows from Corollary~\ref{correm23} and by following the same steps as in proof of the only if part of Theorem~3.3 of Li and Rosalsky~(2006). To prove the first part of the theorem, we note first that
the equivalence between the conditions in~\eqref{lemaDeli} follows directly from
 Corollary~\ref{cormainl:2} by taking $m=2$. Further, if
  \begin{equation*}
    \E\left( \frac{|U_{1,1}V_{1,2}|^6}{\left(\log(e+|U_{1,1}V_{1,2}|)\right)^3} \right) < \infty,
  \end{equation*}
 by Theorem~3 of Lai~(1974) (or see Theorem~\ref{thm:lai} in the Appendix~\ref{sec:appendB}), we have
  \begin{equation*}
    \sum_{n=2}^\infty n\P\left(\frac{|S_n|}{\sqrt{n\log n}}>\lambda\right)<\infty\quad\text{for all }\lambda>2,
  \end{equation*}
  and then, $\ds{\frac{S_n}{\sqrt{n\log n}}}\xrightarrow[n\to \infty]{ P} 0$ and $\ds{\sum_{n=2}^\infty} \frac{p_{n}^{2}}{n}\P\left(\frac{|S_n|}{\sqrt{n\log n}}>\lambda\right)<\infty\quad\text{for all }\lambda>2$. Therefore, by Theorem 3.1 of Li and Rosalsky (2006), we get $ \ds{\limsup_{n\to\infty}}\frac{T_n}{n^{\alpha}}\leqslant\lambda\quad \text{a.s.  for all } \lambda>2$,
 and then, letting $\lambda \downarrow 2$, we get the desired result, and this completes the proof.
\end{proof}

  Note that by taking $\beta=2$ in the second part of Theorem~\ref{thm:li3.3}, we have the statement in Li and Rosalsky~(2006). Further, in the first part of Theorem~\ref{thm:li3.3}, the condition~\eqref{lemaDeli} relaxes the condition~(3.15) of Li and Rosalsky~(2006). Moreover, in addition to state in its full generality the result of Theorem~3.3 of Li and Rosalsky~(2006), we simplify substantially the proof. By using Corollary~\ref{cormain:1}, we establish the following result which generalizes Theorem~2.1 in Li and Rosalsky~(2006).
\begin{thm} \label{thm:1}
   Suppose that $n/p_n$ is bounded away from $0$ and $\infty$. Let $1/2<\alpha \leq 1$. Then, the following statements are equivalent.
  \begin{enumerate}
    \item[(1).] 
  $\ds{\sum_{n=1}^\infty} n^2\P(|X_{1,1}X_{1,2}|\geq n^\alpha) < \infty \quad \mbox{\em and }\quad \E(X_{1,1}) = 0$.
    \item[(2).]   
   $\ds{\sum_{n=1}^\infty}\P(\max_{1\leq i<j\leq n}|X_{1,i}X_{1,j}|\geq n^\alpha) < \infty \quad \mbox{ \em and }\quad \E(X_{1,1}) = 0$.
    \item[(3).]  
   $\ds{\lim_{n\to\infty}}\frac{W_n}{n^\alpha} = 0. \quad\text{a.s.}$
    \item[(4).] 
  $\E\Big(|X_{1,1}|^{3/\alpha}\Big) < \infty \quad \mbox{\em and } \quad \E(X_{1,1}) = 0$.
  \end{enumerate}
\end{thm}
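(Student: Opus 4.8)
The plan is to run the equivalences in two stages: first the three ``moment'' conditions $(1)$, $(2)$, $(4)$, which are handled by the preliminary machinery of Section~\ref{sec:prelim}, and then the genuinely probabilistic equivalence $(2)\Leftrightarrow(3)$, which I would argue exactly as in the proof of Theorem~\ref{thm:li3.2}. In fact the whole statement is the specialization of Theorem~\ref{thm:li3.2} obtained by taking $U_{k,i}=V_{k,i}=X_{k,i}$: by symmetry of the product $\max_{1\le i\neq j\le p_n}=\max_{1\le i<j\le p_n}$, so that $T_n=W_n$; the independence of the i.i.d.\ variables $X_{1,1},X_{1,2}$ collapses $\E(|U_{1,1}|^{3/\alpha}|V_{1,2}|^{3/\alpha})$ to $\big(\E|X_{1,1}|^{3/\alpha}\big)^2$; and $\E(U_{1,1})\E(V_{1,1})$ becomes $(\E X_{1,1})^2$, which vanishes iff $\E X_{1,1}=0$.

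For $(1)\Leftrightarrow(2)$ I would simply invoke Corollary~\ref{cormain:1} with $u_n=n^\alpha$, whose two sides are exactly the summability statements in $(1)$ and $(2)$, the centering $\E(X_{1,1})=0$ being common to both. For $(2)\Leftrightarrow(4)$ I would apply Corollary~\ref{cormainlog} with $m=2$, $\beta=\alpha$, $f$ the identity and $u_n=n^\alpha$; then $u_n/n^\alpha\equiv1$ is bounded away from $0$ and $\infty$, the hypotheses of Lemma~\ref{lem:11} hold, and since $(m+1)/\beta=3/\alpha$ the corollary turns the summability in $(2)$ into $\E(|X_{1,1}|^{3/\alpha})<\infty$, again carrying the centering along unchanged.

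The remaining equivalence $(2)\Leftrightarrow(3)$ is where the work is. For $(2)\Rightarrow(3)$ I would note that, by Corollaries~\ref{cormain:1} and~\ref{coro:2}, condition $(2)$ is equivalent to $\E|X_{1,1}X_{1,2}|^{3/\alpha}<\infty$; since $\E(X_{1,1})=0$ gives $\E Y_1=\E(X_{1,1})\E(X_{1,2})=0$ for $Y_1\sim X_{1,1}X_{1,2}$, the Baum--Katz theorem (Theorem~\ref{thm:BaumKatz}) yields $\sum_n n\,\P\big(\sup_{m\ge n}|S_m|/m^\alpha>\epsilon\big)<\infty$ for every $\epsilon>0$. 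Using $n/p_n$ bounded, hence $p_n^2/n<c^2n$, this gives $\sum_n (p_n^2/n)\,\P(|S_n|/n^\alpha>\epsilon)<\infty$, so that $S_n/n^\alpha\to0$ in probability, and Theorem~3.1 of Li and Rosalsky~(2006) then forces $\limsup_n W_n/n^\alpha\le\epsilon$ a.s.; letting $\epsilon\downarrow0$ gives $(3)$.

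The direction I expect to be the main obstacle is the converse $(3)\Rightarrow(2)$: recovering both the tail-summability and the centering from the single almost-sure statement $\lim_n W_n/n^\alpha=0$. Here I would follow the ``only if'' argument of Li and Rosalsky~(2006), extracting the summability of the single-row maximum from the behaviour of $W_n$ by a Borel--Cantelli / subsequence argument and deducing $\E(X_{1,1})=0$. Throughout this direction the two-sided comparison $c^{-2}n\le p_n^2/n\le c^2 n$, guaranteed by $n/p_n$ being bounded away from $0$ and $\infty$, is precisely what makes the passage between the one-dimensional sum $S_n$ and the maximum over the $\binom{p_n}{2}$ pairs reversible, so I would track it carefully.
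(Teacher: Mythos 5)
Your proposal is correct, and for two of the three equivalences it coincides exactly with the paper's proof: $(1)\Leftrightarrow(2)$ via Corollary~\ref{cormain:1} with $u_n=n^\alpha$, and $(2)\Leftrightarrow(4)$ via Corollary~\ref{cormainlog} with $m=2$, $\beta=\alpha$ --- the paper's entire proof consists of these two citations plus one more sentence. That remaining sentence is where you diverge: for $(2)\Leftrightarrow(3)$ the paper simply cites Li and Rosalsky~(2006), where this equivalence is precisely their Theorem~2.1, in both directions; you instead re-derive the sufficiency direction $(2)\Rightarrow(3)$ from Corollaries~\ref{cormain:1} and~\ref{coro:2}, the Baum--Katz theorem, and Theorem~3.1 of Li and Rosalsky --- in effect transplanting the paper's own proof of Theorem~\ref{thm:li3.2} --- and defer only the converse $(3)\Rightarrow(2)$ to Li--Rosalsky's argument. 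Your opening observation is also a legitimate alternative packaging that the paper does not exploit: with $U_{k,i}=V_{k,i}=X_{k,i}$ one indeed has $T_n=W_n$, and conditions (1), (2), (3) of Theorem~\ref{thm:li3.2} become conditions (3), (2), (4) of the present theorem (the factorizations $\E\big(|X_{1,1}|^{3/\alpha}|X_{1,2}|^{3/\alpha}\big)=\big(\E|X_{1,1}|^{3/\alpha}\big)^2$ and $\E(U_{1,1})\E(V_{1,1})=\big(\E X_{1,1}\big)^2$ being justified by independence and Tonelli), so the whole statement follows from Theorem~\ref{thm:li3.2} together with Corollary~\ref{cormain:1}. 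What the paper's route buys is brevity and logical independence from Theorem~\ref{thm:li3.2}; what yours buys is a more self-contained sufficiency proof and an explicit unification of the two theorems. Note, however, that the two routes are not as different as they may appear on the hard direction: the ``only if'' part of Theorem~\ref{thm:li3.2} is itself deferred to Li and Rosalsky~(2006) in the paper, so both your proof and the paper's ultimately rest on the same external argument for $(3)\Rightarrow(2)$.
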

\begin{proof}[Proof]
The equivalence between (1) and (2) follows directly form Corollary~\ref{cormain:1} by taking $u_{n}=n^{\alpha}$. Further, the equivalence between the statements in $(2)$ and $(3)$ is established in Li and Rosalsky~(2006). Finally, the equivalence between the statements (2) and (4) follows from Corollary~\ref{cormainlog} by taking $m=2$, this completes the proof. 
  \end{proof}
Further, by using Lemma~\ref{lem:1} and Corollary~\ref{cormainl:2}, we establish in its full generality Theorem~2.3 of Li and Rosalsky~(2006).
\begin{thm}
  Suppose that $n/p_n$ is bounded away from $0$ and $\infty$. Then, the following statements are equivalent.
  \begin{enumerate}
    \item[(1).]     
     $\E(X_{1,1}) = 0, \quad \E(X_{1,1}^2) = 1 \text{\em  and  }  \ds{\sum_{n=1}^\infty} n^2\P\bigg(|X_{1,1}X_{1,2}|\geq \sqrt{n\log n} \bigg)<\infty.$
    \item[(2).]
    $\E(X_{1,1}) = 0, \quad \E(X_{1,1}^2) = 1  \text{ \em and  }  \ds{\sum_{n=1}^\infty}\P\bigg( \max_{1\leq i<j\leq n}|X_{1,i}X_{1,j}|\geq \sqrt{n\log n} \bigg)<\infty.$
    \item[(3).]
    $\ds{\lim_{n\to\infty}}\frac{W_n}{\sqrt{n\log n}} = 2 \text{  a.s.}$
    \item[(4).]
    $\E(X_{1,1}) = 0, \quad \E(X_{1,1}^2) = 1  \text{\em and  }  \E\Big( \ds{\frac{(X_{1,1}X_{1,2})^6}{\log^3(e + |X_{1,1}X_{1,2}|)}} \Big) < \infty$.
  \end{enumerate}
\end{thm}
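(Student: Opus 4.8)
The plan is to split the four conditions into the \emph{analytic} group (1), (2), (4), which carry the identical normalization $\E(X_{1,1})=0$, $\E(X_{1,1}^2)=1$ and differ only in how the tail of $X_{1,1}X_{1,2}$ is encoded, and the \emph{almost sure} statement (3); I would settle the analytic equivalences outright from the preliminary corollaries and then treat the passage to (3) as the genuine content, following the template of Theorem~\ref{thm:li3.3}. For (1)$\Leftrightarrow$(2) the two normalizations coincide, so it suffices to match the two tail-series, which is exactly Corollary~\ref{cormain:1} (equivalently Lemma~\ref{lem:1} with $m=2$) applied with $u_{n}=\sqrt{n\log n}$. For (2)$\Leftrightarrow$(4) I would invoke Corollary~\ref{cormainl:2} with $m=2$, observing that $\prod_{h=1}^{2}|X_{1,h}|^{2(m+1)}=(X_{1,1}X_{1,2})^{6}$ and $\ln(e+\prod_{h=1}^{2}|X_{1,h}|)=\log(e+|X_{1,1}X_{1,2}|)$, so the expectation in that corollary is literally the one displayed in (4). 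This gives (1)$\Leftrightarrow$(2)$\Leftrightarrow$(4).

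It remains to link (2) with (3). For (2)$\Rightarrow$(3) I would prove the two one-sided bounds separately. Since $Y_{1}\sim X_{1,1}X_{1,2}$ satisfies $\E(Y_{1})=\E(X_{1,1})\E(X_{1,2})=0$ and $\E(Y_{1}^{2})=\E(X_{1,1}^{2})\E(X_{1,2}^{2})=1$, condition (4) lets me apply Lai's theorem (Theorem~\ref{thm:lai}) to obtain $\sum_{n\geq 2}n\,\P(|S_{n}|/\sqrt{n\log n}>\lambda)<\infty$ for every $\lambda>2$; then $S_{n}/\sqrt{n\log n}\xrightarrow[n\to\infty]{P}0$ and, using $p_{n}^{2}/n<c^{2}n$, also $\sum_{n\geq 2}(p_{n}^{2}/n)\,\P(|S_{n}|/\sqrt{n\log n}>\lambda)<\infty$, so Theorem~3.1 of Li and Rosalsky yields $\limsup_{n}W_{n}/\sqrt{n\log n}\leq\lambda$ a.s., and $\lambda\downarrow 2$ gives the upper bound, exactly as in Theorem~\ref{thm:li3.3}. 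The matching lower bound $\liminf_{n}W_{n}/\sqrt{n\log n}\geq 2$ a.s.\ I would import from the lower-bound half of Li and Rosalsky's proof of Theorem~2.3, which rests on a second-moment / Borel--Cantelli estimate across the $\binom{p_{n}}{2}\asymp n^{2}$ nearly independent pair-sums, the normalization $\E(Y_{1}^{2})=1$ being precisely what calibrates the constant to $2$.

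For the converse (3)$\Rightarrow$(2), from $\lim_{n}W_{n}/\sqrt{n\log n}=2$ a.s.\ I would first read off $\limsup_{n}W_{n}/\sqrt{n\log n}<\infty$ a.s.; following the ``only if'' steps of Theorem~\ref{thm:li3.3} this delivers the series condition in (2) together with $\E(X_{1,1})\E(X_{1,2})=0$, hence $\E(X_{1,1})=0$, while Corollary~\ref{correm23} (with $m=2$) gives $\E|X_{1,1}|^{\beta}<\infty$ for all $0\leq\beta<6$, so in particular $\E(X_{1,1}^{2})$ is finite. The \emph{exact} value $2$ of the limit then forces $\E(X_{1,1}^{2})=1$, since writing $\tau=\E(X_{1,1}^{2})$ gives $\mathrm{Var}(Y_{1})=\tau^{2}$ and rescales the almost sure limit to $2\tau$.

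The main obstacle is the lower bound in (2)$\Rightarrow$(3) and, dually, the recovery of the sharp normalization $\E(X_{1,1}^{2})=1$ in (3)$\Rightarrow$(2). Neither is produced by the preliminary corollaries, which only govern the tail-series and moment equivalences; both depend on the precise extreme-value asymptotics of the maximum over the $\asymp n^{2}$ pair-sums, that is, on the full strength of Theorem~3.1 of Li and Rosalsky and its lower-bound companion. Pinning the constant to exactly $2$ rather than merely to a finite $\limsup$ is where the variance computation $\E(Y_{1}^{2})=\E(X_{1,1}^{2})\E(X_{1,2}^{2})=1$ enters decisively.
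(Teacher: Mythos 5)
Your proposal is correct and, on the new content of the theorem, coincides with the paper's proof: the paper likewise obtains (1)$\Leftrightarrow$(2) from Lemma~\ref{lem:1} (equivalently Corollary~\ref{cormain:1}) with $m=2$ and $u_n=\sqrt{n\log n}$, and (2)$\Leftrightarrow$(4) from Corollary~\ref{cormainl:2} with $m=2$. For (2)$\Leftrightarrow$(3) the paper simply cites Li and Rosalsky~(2006), so your partial re-derivation (Lai's theorem plus their Theorem~3.1 for the upper bound, the imported lower-bound half, and the rescaling argument pinning down $\E(X_{1,1}^2)=1$) is correct but redundant detail, resting on exactly the same ingredients the paper itself deploys in Theorem~\ref{thm:li3.3}.
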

\begin{proof}[Proof]
 The Equivalence between $(1)$ and $(2)$ follows directly from Lemma~\ref{lem:1} by taking $m=2$ and $u_{n}=\sqrt{n\ln(n)}$. Further, the equivalence between $(2)$ and $(3)$ is established in Li and Rosalsky~(2006), and  the equivalence between $(2)$ and $(4)$ follows directly from Corollary \ref{cormainl:2} by taking $m=2$, this completes the proof.
\end{proof}
\appendix
\section{Proofs of some preliminary results}\label{sec:appendA}
  \begin{proof}[Proof of Theorem~\ref{thm:2}]
  Let $\Lambda_n = \{\beta_n\leq|X|<\beta_{n+1}\}$, $n=0,1,\dots$. We have
  \begin{equation*}
    \E(|X|) =\ds{ \int_{\ds{\bigcup_{n=0}^\infty} \Lambda_n}}|X|\,d\P = \sum_{n=0}^\infty \int_{\Lambda_n}|X|\,d\P,
  \end{equation*}
  and then,
  \begin{equation}
    \sum_{n=0}^\infty\beta_n\P(\Lambda_n) \leq \E(|X|) \leq\sum_{n=0}^\infty \beta_{n+1}\P(\Lambda_n) = \sum_{n=0}^\infty (\beta_{n+1} - \beta_{n})\P(\Lambda_n) + \sum_{n=0}^\infty\beta_n\P(\Lambda_n).\label{eqclein}
  \end{equation}
  Further, there exists $B>0$ such that $(\beta_{n+1} - \beta_{n})\leqslant B\beta_{n}$, $n=1,2,\dots$. Then,
  \begin{eqnarray}
    \sum_{n=0}^\infty (\beta_{n+1} - \beta_{n})\P(\Lambda_n) + \sum_{n=0}^\infty\beta_n\P(\Lambda_n)\leqslant \beta_{1}\P(\Lambda_{0})+B \sum_{n=1}^\infty\beta_n\P(\Lambda_n)+ \sum_{n=1}^\infty\beta_n\P(\Lambda_n)\nonumber\\=\beta_{1}\P(\Lambda_{0})+(B+1) \sum_{n=1}^\infty\beta_n\P(\Lambda_n).\nonumber 
  \end{eqnarray}
  Then, from \eqref{eqclein}, we have
  \begin{equation}
    \sum_{n=1}^\infty\beta_n\P(\Lambda_n) \leq \E(|X|) \leq \beta_{1}+(B+1) \sum_{n=1}^\infty\beta_n\P(\Lambda_n).\label{eqcle}
  \end{equation}
  Observe that
  \begin{align}
    \sum_{n=1}^N\beta_n\P(\Lambda_n) 
    & = \sum_{n=1}^N (\beta_{n} -\beta_{n-1})\P(|X|\geq \beta_n) - \beta_N\P(|X|\geq\beta_{N+1}), \quad N=1, 2,\dots, \label{eqcle2}
  \end{align}
  then,
  \begin{align}
    \sum_{n=1}^N\beta_n\P(\Lambda_n) \leqslant \sum_{n=1}^N (\beta_{n} -\beta_{n-1})\P(|X|\geq \beta_n), \quad N=1, 2,\dots, \label{inqcle2}
  \end{align}
  this gives
  \begin{align}
    \sum_{n=1}^\infty\beta_n\P(\Lambda_n) \leqslant \sum_{n=1}^\infty (\beta_{n} -\beta_{n-1})\P(|X|\geq \beta_n)\leqslant c\sum_{n=1}^\infty \alpha_{n}\P(|X|\geq \beta_n), \label{inqcle3}
  \end{align}
  and then, combining \eqref{eqcle} and \eqref{inqcle3}, we have
  \begin{equation}
    \sum_{n=1}^\infty\beta_n\P(\Lambda_n) \leq \E(|X|) \leq \beta_{1}+(B+1) \sum_{n=1}^\infty\beta_n\P(\Lambda_n)\leqslant\beta_{1}+(B+1)c\sum_{n=1}^\infty \alpha_{n}\P(|X|\geq \beta_n).\label{inqcle4}
  \end{equation}
  First, suppose that $\E(|X|)=\infty$, by \eqref{inqcle4}, we have $\ds{\sum_{n=1}^\infty }\alpha_{n}\P(|X|\geq \beta_n)=\infty$ and then, \\$c^{-1}\ds{\sum_{n=1}^\infty } \alpha_{n}\P(|X|\geq \beta_n)=\E(|X|)=(B+1)c\ds{\sum_{n=1}^\infty} \alpha_{n}\P(|X|\geq \beta_n)=\infty$, this proves the statement.

  Second, suppose that $\E(|X|)< \infty$.
  By Lebesgue dominated convergence Theorem, we have $\ds{\lim_{N\rightarrow \infty}}\E(|X|\mathbb{I}_{\{ |X|\geq \beta_{N+1} \}})= 0$ and then,
  \begin{equation*}
    \beta_N\P(|X|\geq\beta_{N+1}) \leq \beta_{N+1}\P(|X|\geq\beta_{N+1}) \leq \E(|X|\mathbb{I}_{\{ |X|\geq \beta_{N+1} \}})\xrightarrow[N\to \infty]{  } 0.
  \end{equation*}
  Thus, from \eqref{eqcle2}, we have
  \begin{equation}
    \sum_{n=1}^\infty (\beta_{n} -\beta_{n-1})\P(|X|\geq \beta_n)=\sum_{n=1}^\infty\beta_n\P(\Lambda_n), \label{eqclefin}
  \end{equation}
  then, since $c^{-1}\alpha_{n}\leqslant \beta_n - \beta_{n-1} \leqslant c \alpha_n$, by combining \eqref{inqcle4} and \eqref{eqclefin},
  we have
  \begin{equation*}
    c^{-1}\sum_{n=1}^\infty \alpha_n\P(|X|\geq \beta_n) \leqslant \sum_{n=1}^\infty\beta_n\P(\Lambda_n)\leqslant \E(|X|)\leqslant \beta_{1}+(B+1)c\sum_{n=1}^\infty \alpha_n\P(|X|\geq \beta_n,
  \end{equation*}
this completes the proof.
%
%
\end{proof}

\begin{proof}[Proof of Corollary~\ref{coro:1}]
  The condition that $\displaystyle{\sum_{n=1}^\infty n^\alpha}\P\Big( |X|>n^\beta \Big) < \infty$ is equivalent with\\ $\displaystyle{\sum_{n=1}^\infty} n^\alpha\P\Big(|X|^{\frac{\alpha+1}{\beta}}>n^{\alpha+1}\Big) < \infty$. Now let $\alpha_n := n^\alpha$ and $\beta_n := n^{\alpha+1}$. We have \\$\beta_{n+1} - \beta_n = (n+1)^{\alpha+1} -n^{\alpha+1}=n^{\alpha+1}\Big( (\frac{n+1}{n})^{\alpha+1}-1 \Big)\leq(2^{\alpha+1}-1)\cdot n^{\alpha+1}=\mathcal{O}(n^{\alpha+1})$, and
  \begin{align*}
    \beta_{n} - \beta_{n-1}& = n^{\alpha+1} - (n-1)^{\alpha+1} = n^\alpha\Big[n-(\frac{n-1}{n})^\alpha\cdot(n-1)\Big]
    \geq n^\alpha = \alpha_n
  \end{align*}
  Also, since $n - (\frac{n-1}{n})^\alpha(n-1)\to 1$, by the fact every convergent sequence is bounded, there exists a constant $C$ such that for all $n\geq1$, $n - (\frac{n-1}{n})^\alpha(n-1)\leq C$. Let $c = \max\{2,C\}$. We verified that $c^{-1}\alpha_n\leq \beta_n - \beta_{n-1} \leq c\alpha_n \quad \forall n, \text{  for some } c\geq 1$. Therefore, the rest of the proof follows directly from Theorem~\ref{thm:2}.
\end{proof}

\begin{proof}[Proof of Corollary~\ref{coro:2}]
  The proof follows form Corollary \ref{coro:1}. Since $\alpha_n/n^\alpha$ and $\beta_n/n^\beta$ are bounded away from $0$ and $\infty$. Then exists $a>1$ s.t. $a^{-1}<\frac{\alpha_n }{n^\alpha}<a$ and $a^{-1}<\frac{\beta_n }{n^\beta}<a$. Thus $\displaystyle{\sum_{n=1}^\infty \alpha_n}\P\Big(|X|>\beta_n\Big)<\infty$ implies $\displaystyle{\sum_{n=1}^\infty a^{-1}n^\alpha\P\Big( |X|>an^\beta \Big)}<\infty$, and this is equivalent to $\displaystyle{\sum_{n=1}^\infty n^\alpha\P\Big( \frac{|X|}{a}>n^\beta \Big)}<\infty.$ Now by Corollary \ref{coro:1}, $\E\Big(\left(\frac{|X|}{a}\right)^{\frac{\alpha+1}{\beta}}\Big)<\infty$, i.e. $\E\Big(|X|^{\frac{\alpha+1}{\beta}}\Big)<\infty$. Conversely, if $\E(|X|^{\frac{\alpha+1}{\beta}})<\infty$, we have $\E\Big((a\cdot|X|)^\frac{\alpha+1}{\beta}\Big) < \infty$. Then, by Corollary \ref{coro:1}, $\displaystyle{\sum_{n=1}^\infty n^\alpha\P\Big(|X|>\frac{n^\beta}{a}\Big)} < \infty$, which implies $\displaystyle{\sum_{n=1}^\infty \alpha_n\P\Big(|X|>\beta_n\Big)}$, this complete the proof.
\end{proof}
\begin{proof}[Proof of Lemma~\ref{lem:1}]
The sufficient condition follows directly from the sub-additivity. To prove the necessary condition, let
$\mathbb{A}=\{n:\P\Big(\ds{\max_{1\leq i_{1}<i_{2}<\dots<i_{m}\leq n}}\prod_{h=1}^{m}|X_{1,i_{h}}|\geq u_{n} \Big)=0\}$. We have
\begin{eqnarray}
\ds{\sum_{n=1}^{\infty}}\P\Big(\max_{1\leq i_{1}<i_{2}<\dots<i_{m}\leq n}\prod_{h=1}^{m}|X_{1,i_{h}}|\geq u_{n} \Big)=\ds{\sum_{n\in \mathbb{N}\backslash \mathbb{A}}}\P\Big(\max_{1\leq i_{1}<i_{2}<\dots<i_{m}\leq n}\prod_{h=1}^{m}|X_{1,i_{h}}|\geq u_{n} \Big)\label{nonnullmax}
\end{eqnarray}
and note that if $n\in \mathbb{A}$, $\P\Big(\ds{\prod_{h=1}^{m}}|X_{1,h}|\geq u_n\Big)=0$ and then $n^m\P\Big(\ds{\prod_{h=1}^{m}}|X_{1,h}|\geq u_n\Big)=0$, $\forall n\in \mathbb{A}$. Then,
\begin{eqnarray}
\ds{\sum_{n=1}^{\infty}}n^m\P\Big(\prod_{h=1}^{m}|X_{1,h}|\geq u_n\Big)=\ds{\sum_{n\in \mathbb{N}\backslash \mathbb{A}}}n^m\P\Big(\prod_{h=1}^{m}|X_{1,h}|\geq u_n\Big). \label{nonnullad}
\end{eqnarray}
Hence, from \eqref{nonnullmax}~and~\eqref{nonnullad}, it suffices  to prove that if \\$\ds{\sum_{n\in \mathbb{N}\backslash \mathbb{A}}}\P\Big(\ds{\max_{1\leq i_{1}<i_{2}<\dots<i_{m}\leq n}}\prod_{h=1}^{m}|X_{1,i_{h}}|\geq u_{n} \Big)<\infty$ then $\ds{\sum_{n\in \mathbb{N}\backslash \mathbb{A}}}n^m\P\Big(\ds{\prod_{h=1}^{m}}|X_{1,h}|\geq u_n\Big)<\infty$. Hence, in the sequel, we suppose without loss of generality that $\P\Big(\ds{\prod_{h=1}^{m}}|X_{1,h}|\geq u_n\Big)>0$ for all $n=1,2,\dots$
Thus, if $\ds{\sum_{n=1}^{\infty}}\P\Big(\max_{1\leq i_{1}<i_{2}<\dots<i_{m}\leq n}\prod_{h=1}^{m}|X_{1,i_{h}}|\geq u_{n} \Big)<\infty$, we have
\begin{eqnarray}
\ds{\lim_{n\rightarrow \infty}}\P\Big(\max_{1\leq i_{1}<i_{2}<\dots<i_{m}\leq n}\prod_{h=1}^{m}|X_{1,i_{h}}|\geq u_{n} \Big)=0. \label{eqimport}
\end{eqnarray}
 Then, it suffices to prove that
  \begin{equation*}
    m!\P\Big(\max_{1\leq i_{1}<i_{2}<\dots<i_{m}\leq n}\prod_{h=1}^{m}|X_{1,i_{h}}|\geq u_{n} \Big) \sim n^m\P\Big(\prod_{h=1}^{m}|X_{1,h}|\geq u_n\Big),\quad \text{as } n\to\infty.
  \end{equation*}
  For convenience $X_{1,i}$ is written as $X_i$.

First, observe that $\log x\leq x-1, \forall x> 0$. Then,
\begin{equation*}
  \log\P\Big(\prod_{h=1}^{m}|X_{h}|\leq u_n \Big) \leq \P\Big( \prod_{h=1}^{m}|X_{h}|\leq u_n \Big) -1
  = -\P\big( \prod_{h=1}^{m}|X_{h}|>u_n \big),
\end{equation*}
and then,
\begin{equation*}
  \frac{n!}{(n-m)!m!}\log \P\left(\prod_{h=1}^{m}|X_{h}|\leq u_n\right) \leq -\frac{n!}{(n-m)!m!}\P(\prod_{h=1}^{m}|X_{h}|>u_n).
\end{equation*}
Taking the exponential both side, we have
\begin{equation*}
  \Big[\P\big(\prod_{h=1}^{m}|X_{h}|\leq u_n\big)\Big]^{\ds{\frac{n!}{(n-m)!m!}}} \leq \exp\Big[ -\frac{n!}{(n-m)!m!}\P(\prod_{h=1}^{m}|X_{h}|>u_n) \Big].
\end{equation*}
Thus,
\begin{equation*}
  \P\Big( \max_{1\leq i_{1}<i_{2}<\dots<i_{m}\leq n}\prod_{h=1}^{m}|X_{1,i_{h}}|\leq u_n \Big) \leq \exp\Big[ -\frac{n!}{(n-m)!m!}\P(\prod_{h=1}^{m}|X_{h}|>u_n) \Big],
\end{equation*}
and then
\begin{equation}
  \P\Big( \max_{1\leq i_{1}<i_{2}<\dots<i_{m}\leq n}\prod_{h=1}^{m}|X_{1,i_{h}}| > u_n \Big) \geq 1 - \exp\left[ -\frac{n!}{(n-m)!m!}\P\left(\prod_{h=1}^{m}|X_{h}|>u_n\right) \right]>0.\label{inequality}
\end{equation}
Then, by combining \eqref{eqimport} and \eqref{inequality}, we get
\begin{equation}
  \lim_{n\rightarrow \infty}\P\Big( \max_{1\leq i_{1}<i_{2}<\dots<i_{m}\leq n}\prod_{h=1}^{m}|X_{1,i_{h}}| > u_n \Big) = \lim_{n\rightarrow \infty}\frac{n!}{(n-m)!m!}\P(\prod_{h=1}^{m}|X_{h}|>u_n) =0.\label{equivelent}
\end{equation}
By combining ~\eqref{inequality} and ~\eqref{equivelent} along with the fact that  $1 - e^{-x} \sim x$ as $x\to 0$, we get
\begin{equation}
  \lim_{n\rightarrow \infty}\ds{\frac{\P\Big( \ds{\max_{1\leq i_{1}<i_{2}<\dots<i_{m}\leq n}}\prod_{h=1}^{m}|X_{1,i_{h}}| > u_n \Big) }{\ds{\frac{n!}{(n-m)!m!}\P(\prod_{h=1}^{m}|X_{h}|>u_n) }}}\geq 1.\label{preliequi}
\end{equation}
Further, by sub-additivity, we have
\begin{equation*}
  \frac{n!}{(n-m)!m!}\P(\prod_{h=1}^{m}|X_{h}|>u_n) \geqslant \P\Big( \max_{1\leq i_{1}<i_{2}<\dots<i_{m}\leq n}\prod_{h=1}^{m}|X_{1,i_{h}}| > u_n \Big),
\end{equation*}
and then,
\begin{equation}
  \lim_{n\rightarrow \infty}\frac{\P\Big( \ds{\max_{1\leq i_{1}<i_{2}<\dots<i_{m}\leq n}}\ds{\prod_{h=1}^{m}}|X_{1,i_{h}}| > u_n \Big) }{\ds{\frac{n!}{(n-m)!m!}}\P\left(\ds{\prod_{h=1}^{m}}|X_{h}|>u_n\right) }\leqslant1.\label{prelsubadd}
\end{equation}
Hence, by combining ~\eqref{preliequi} and \eqref{prelsubadd}, we have
\begin{equation*}
  \frac{n!}{(n-m)!m!}\P\left(\prod_{h=1}^{m}|X_{h}|>u_n\right) \sim \P\Big( \max_{1\leq i_{1}<i_{2}<\dots<i_{m}\leq n}\prod_{h=1}^{m}|X_{1,i_{h}}| > u_n \Big).
\end{equation*}
Therefore,
\begin{equation*}
  \frac{n^m}{m!}\P\left(\prod_{h=1}^{m}|X_{h}|>u_n\right) \sim \P\Big( \max_{1\leq i_{1}<i_{2}<\dots<i_{m}\leq n}\prod_{h=1}^{m}|X_{1,i_{h}}| > u_n \Big),
\end{equation*}
this completes the proof.
  \end{proof}
\begin{proof}[Proof of Corollary \ref{cormainl:2}]
 By Lemma \ref{lem:1}, $\ds{\sum_{n=1}^\infty}\P\left(\max_{1\leq i_{1}<i_{2}<\dots<i_{m}\leq n}\prod_{h=1}^{m}|X_{1,i_{h}}|\geq \sqrt{n \ln(n)}\right) <~\infty$ if and only if
$\ds{\sum_{n=3}^\infty} n^m\P\left(\prod_{h=1}^{m}|X_{1,h}|\geq \sqrt{n\ln(n)}\right)< \infty$, and this is equivalent to \\$\ds{\sum_{n=3}^\infty} n^m\P\left(\prod_{h=1}^{m}|X_{1,h}|^{2}\geq n\ln(n)\right)< \infty$. Then, by taking $f(x)=\frac{x}{\ln\left(e+\sqrt{x}\right)}$, $x\geqslant3$, one can verify that $f(n\ln(n))\big/n\geqslant 1/2$ for all $n\geqslant3$ and $\ds{\lim_{n\rightarrow\infty}}f(n\ln(n))\big/n=2$ and this implies that $f(n\ln(n))\big/n\geqslant 1/2$ is bounded away from 0 and from infinity. Therefore, the proof follows from Lemma~\ref{lem:11} by taking $\beta=1$.
\end{proof}

\section{Theorems of Baum and Katz~(1965) and Lai~(1974) used}\label{sec:appendB}
For the convenience of the reader, we recall in this Appendix two results from the celebrated theorems of Baum and Katz~(1965), and Lai~(1974) which are used in this paper. These results can also been found in Li and Rosalsky~(2006).
\begin{thm}[Baum and Katz~(1965)]\label{thm:BaumKatz}
 Let $\{S_{n},n\geqslant 1\}$ be a sequence of partial sums as defined in Section~\ref{sec:mainres}. Let $\beta>0$ and $\alpha>1/2$, and suppose that $\E(Y_{1})=0$ if $\alpha\leqslant 1$. Then, the following are equivalent:
\begin{eqnarray*}
\sum_{n=1}^{\infty} n^{2\beta-1}\P\left(\frac{|S_{n}|}{n^{\alpha}}>\epsilon\right)&<&\infty \quad{ } \mbox{ for all $\epsilon>0$},\\
\sum_{n=1}^{\infty} n^{2\beta-1}\P\left(\ds{\sup_{m\geqslant n}}\frac{|S_{m}|}{m^{\alpha}}>\epsilon\right)&<&\infty \quad{ } \mbox{ for all $\epsilon>0$},\\
\E\left(|Y_{1}|^{(2\beta+1)/\alpha}\right)&<&\infty.
\end{eqnarray*}
\end{thm}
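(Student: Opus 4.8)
The plan is to prove the three conditions equivalent by a cycle, writing $r := (2\beta+1)/\alpha$ so that the common weight is $n^{2\beta-1} = n^{\alpha r - 2}$ and the moment condition reads $\E(|Y_1|^r)<\infty$. The implication from the second (supremum) series to the first is immediate, since $\sup_{m\geq n}|S_m|/m^\alpha \geq |S_n|/n^\alpha$ forces each term of the first series to be dominated by the corresponding term of the second. It therefore remains to prove sufficiency, $\E(|Y_1|^r)<\infty \Rightarrow$ the supremum series converges, and necessity, the first series converges $\Rightarrow \E(|Y_1|^r)<\infty$; together with the trivial implication these close the cycle.

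For sufficiency I would truncate at the level $n^\alpha$, writing $Y_k = Y_k\mathbb{I}(|Y_k|\leq n^\alpha) + Y_k\mathbb{I}(|Y_k|> n^\alpha)$, and split $\P(\max_{m\leq n}|S_m|>\epsilon n^\alpha)$ into three contributions. The large-jump contribution is bounded by $n\P(|Y_1|>n^\alpha)$; after multiplication by $n^{\alpha r - 2}$ and summation this is exactly $\ds{\sum_{n}}n^{\alpha r - 1}\P(|Y_1|>n^\alpha)$, which is finite by Corollary~\ref{coro:1} (with the exponents chosen so that $(\alpha r - 1 + 1)/\alpha = r$). The centering contribution, coming from $\E[Y_1\mathbb{I}(|Y_1|\leq n^\alpha)]$, is controlled using $\E(Y_1)=0$ when $\alpha\leq 1$, and is harmless when $\alpha>1$ since the mean is then rescaled by $n^{1-\alpha}\to 0$. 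The remaining centered-and-truncated sum is handled by combining a maximal inequality (Ottaviani's or L\'evy's) with a Rosenthal / Marcinkiewicz--Zygmund moment bound of a suitably high even order $2q$, balancing the term $n\,\E|Y_1\mathbb{I}(|Y_1|\le n^\alpha)|^{2q}$ against $(n\,\E Y_1^2)^q$; passing from the maximum over $m\leq n$ to the supremum over $m\geq n$ is then a routine dyadic-blocking argument.

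For necessity I would first note that convergence of the first series forces $\P(|S_n|>\epsilon n^\alpha)\to 0$. Symmetrizing $Y_k$ and using a L\'evy-type reflection inequality, the tail of $\max_{k\leq n}|Y_k|$ is dominated by a constant multiple of $\P(|S_n|>\epsilon n^\alpha)$; since $\P(\max_{k\leq n}|Y_k^{s}|>t)$ is bounded below by a constant multiple of $n\P(|Y_1^{s}|>t)$ for $t$ in the relevant range, this yields $\ds{\sum_n}n^{\alpha r -1}\P(|Y_1|>c\,n^\alpha)<\infty$, whence $\E(|Y_1|^r)<\infty$ again by Corollary~\ref{coro:1}; desymmetrization uses the centering hypothesis.

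The main obstacle will be the centered-truncated estimate in the sufficiency direction: one must choose the truncation moment order $2q$ large enough (as a function of $r$ and $\alpha$) so that both Rosenthal terms, once weighted by $n^{\alpha r - 2}$, give convergent series, and the hypothesis $\alpha>1/2$ is exactly what makes the variance block summable. The case distinction $\alpha\leq 1$ versus $\alpha>1$ in handling the centering term is the second delicate point.
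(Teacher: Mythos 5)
You should first be aware that the paper contains no proof of this statement: Theorem~\ref{thm:BaumKatz} is imported verbatim from Baum and Katz~(1965) and is recalled in Appendix~\ref{sec:appendB} purely ``for the convenience of the reader'' (it is then used as a black box in the proof of Theorem~\ref{thm:li3.2}). So there is no internal argument to compare yours with; what you have produced is a sketch of the classical proof of the Baum--Katz theorem itself, and it follows the standard route: the implication from the supremum series to the single-term series is trivial, sufficiency is handled by truncation at $n^{\alpha}$ together with a maximal inequality and a high-order Rosenthal/Marcinkiewicz--Zygmund bound, and necessity by symmetrization, L\'evy's inequality and a lower bound for the tail of the maximum of the summands, with Corollary~\ref{coro:1} of the paper correctly invoked to convert weighted tail series into moments. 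The cycle (3)$\Rightarrow$(2)$\Rightarrow$(1)$\Rightarrow$(3) does close the equivalence.

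Two points in your sketch need a further idea, not just bookkeeping. First, your resolution of the ``main obstacle'' --- choose the order $2q$ large and balance $n\,\E|Y_1\mathbb{I}(|Y_1|\le n^{\alpha})|^{2q}$ against $(n\E Y_1^{2})^{q}$ --- does not work termwise: writing $r=(2\beta+1)/\alpha$, the natural bound $\E\bigl[|Y_1|^{2q}\mathbb{I}(|Y_1|\le n^{\alpha})\bigr]\le n^{\alpha(2q-r)}\E|Y_1|^{r}$ gives
\begin{equation*}
n^{2\beta-1}\cdot n^{1-2q\alpha}\cdot n^{\alpha(2q-r)}\,\E|Y_1|^{r}=n^{-1}\,\E|Y_1|^{r},
\end{equation*}
whose sum diverges for every choice of $q$. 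The missing step is to keep the indicator and interchange summation and expectation: for $2q>r$,
\begin{equation*}
\sum_{n=1}^{\infty}n^{2\beta-2q\alpha}\,\E\bigl[|Y_1|^{2q}\mathbb{I}(|Y_1|\le n^{\alpha})\bigr]
=\E\Bigl[|Y_1|^{2q}\sum_{n\ge |Y_1|^{1/\alpha}}n^{2\beta-2q\alpha}\Bigr]\le C\bigl(1+\E|Y_1|^{r}\bigr)<\infty,
\end{equation*}
so the largeness of $q$ enters through this Fubini step, not through a termwise estimate. Second, in the necessity direction, the claim that convergence of $\sum_{n}n^{2\beta-1}\P(|S_n|>\epsilon n^{\alpha})$ forces $\P(|S_n|>\epsilon n^{\alpha})\to0$ is false for general sequences when $2\beta<1$ (the weights then tend to $0$, so sparse large terms are compatible with convergence); it holds in this setting only after symmetrization, via L\'evy's inequality and blocking over $n\le m\le 2n$, so the order of your argument must be rearranged: symmetrize first, then extract the decay. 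Finally, a minor slip: desymmetrization at the end uses weak symmetrization inequalities about a median and does not use the centering hypothesis; $\E(Y_1)=0$ is needed only in the sufficiency direction, to control the centering of the truncated variables when $\alpha\le1$. With these repairs your outline is exactly the standard, correct proof of the quoted theorem.
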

\begin{thm} [Lai~(1974)] \label{thm:lai}
 Let $\beta>0$, let $S_{n}$ be as in Theorem~\ref{thm:BaumKatz} and suppose that
\begin{eqnarray*}
\E(Y_{1})=0, \E(Y_{1}^{2})=1, \mbox{ and } \E\left(\frac{|Y_{1}|^{4\beta+2}}{\left(\ln\left(e+|Y_{1}|\right)\right)^{2\beta+1}}\right)<\infty.
\end{eqnarray*}
Then,
\begin{eqnarray*}
\sum_{n=1}^{\infty} n^{2\beta-1}\P\left(\frac{|S_{n}|}{\sqrt{n\ln(n)}}>\lambda\right)&<&\infty \quad{ } \mbox{ for all $\lambda>2\sqrt{\beta}$}.
\end{eqnarray*}
\end{thm}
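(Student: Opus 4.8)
The plan is to prove the displayed complete-convergence bound by the classical truncation-plus-exponential-inequality method, organised so that the hypothesised moment is spent exactly where it is sharp. Write $x_n=\sqrt{n\ln n}$ and fix $\lambda>2\sqrt{\beta}$. The first, and structurally cleanest, step is to isolate the contribution of the large summands. Set $c_n=x_n$ and discard the event $\{\max_{k\le n}|Y_k|>c_n\}$: by subadditivity its probability is at most $n\P(|Y_1|>x_n)$, so the corresponding weighted series is dominated by $\ds\sum_n n^{2\beta}\P(|Y_1|>x_n)$. Applying the monotone transform $f(t)=t^2/\ln(e+t)$, for which $f(x_n)/n\to 2$ is bounded away from $0$ and from infinity, Corollary~\ref{coro:2} (equivalently, the single-variable content of Lemma~\ref{lem:11} and Corollary~\ref{cormainl:2} with power exponent $2\beta$ and $\gamma=1$) gives $\ds\sum_n n^{2\beta}\P(|Y_1|>x_n)<\infty$ if and only if $\E\bigl[(|Y_1|^2/\ln(e+|Y_1|))^{2\beta+1}\bigr]<\infty$, which is precisely the hypothesis $\E\bigl(|Y_1|^{4\beta+2}/(\ln(e+|Y_1|))^{2\beta+1}\bigr)<\infty$. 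Thus the moment condition is exactly calibrated to permit discarding everything above the normalising level $x_n$.

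Second, on the complementary event one works with the truncated, centred sum $\widetilde S_n=\ds\sum_{k=1}^n\bigl(Y_k\mathbb{I}(|Y_k|\le c_n)-\E[Y_1\mathbb{I}(|Y_1|\le c_n)]\bigr)$. Since $\E Y_1=0$ and the moment hypothesis forces more than two finite moments, $n\,|\E[Y_1\mathbb{I}(|Y_1|>c_n)]|=o(x_n)$, so the centring does not move the threshold and it suffices to bound $\P(|\widetilde S_n|>(\lambda-o(1))x_n)$. Using $\E Y_1^2=1$, the total variance is at most $n$, and the target is the Gaussian-tail estimate $\P(|\widetilde S_n|>\lambda x_n)\lesssim n^{-\lambda^2/2+o(1)}$. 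Granting this, the full weighted series is dominated by $\ds\sum_n n^{2\beta-1}\,n^{-\lambda^2/2+o(1)}$, which converges exactly because $2\beta-1-\lambda^2/2<-1$ is equivalent to $\lambda>2\sqrt{\beta}$; this is where the precise constant $2\sqrt{\beta}$ and the normalisation $\E Y_1^2=1$ enter.

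The hard part is producing the exponential bound with the sharp constant $1/2$ in the exponent, because the truncation level $c_n$ is comparable to the deviation $x_n$ and a naive Bernstein inequality fails: the linear term $c_n x_n\asymp n\ln n$ swamps the variance $n$ and destroys the $\ln n$ in the exponent. I would resolve this by a two-scale refinement rather than a single cut at $x_n$: retain the cut at $x_n$ for the discard step above, but for the sum itself split the kept variables into a small part $|Y_k|\le\delta_n\sqrt{n}$ with $\delta_n=o(1/\sqrt{\ln n})$, on which Bernstein does yield exponent $\lambda^2\ln n/(2+o(1))$, and a medium part $\delta_n\sqrt{n}<|Y_k|\le x_n$, whose indices form a $\mathrm{Binomial}(n,p_n)$ count; one bounds the probability that enough medium terms occur to exhaust the deviation budget by $\binom{n}{j}p_n^{\,j}$ and checks, again through Corollary~\ref{coro:2}, that the resulting weighted series is summable under the same moment. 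A more economical alternative is to invoke a Fuk--Nagaev inequality, whose Gaussian component already gives $\exp(-x_n^2/((2+\varepsilon)n))=n^{-\lambda^2/(2+\varepsilon)}$ while its power component is controlled by the finite moment; letting $\varepsilon\downarrow 0$ together with $\lambda\downarrow 2\sqrt{\beta}$ then closes the argument. Either way, the sole genuine obstacle is this sharp-constant moderate-deviation estimate at the scale $\sqrt{n\ln n}$; the truncation bookkeeping and the reduction of the discarded mass to the stated moment are routine once the preliminary corollaries are in hand.
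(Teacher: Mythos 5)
You should first note a structural point: the paper does not prove this statement at all. Theorem~\ref{thm:lai} is recalled verbatim in Appendix~\ref{sec:appendB} as an imported tool, with the proof residing in Lai~(1974); so what you have written is an attempted reproof of Lai's Theorem~3, not a reconstruction of anything in the paper. Your outer bookkeeping is sound and even meshes nicely with the paper's machinery: discarding $\{\max_{k\leq n}|Y_k|>x_n\}$ costs $\sum_n n^{2\beta}\P(|Y_1|>x_n)$, which via $f(t)=t^2/\ln(e+t)$ and Corollary~\ref{coro:2} is finite exactly under the hypothesised moment; the centring estimate is routine; and the arithmetic $2\beta-1-\lambda^2/2<-1\iff\lambda>2\sqrt{\beta}$ is correct. (One small simplification: since the claim quantifies over each fixed $\lambda>2\sqrt{\beta}$ separately, you may simply choose $\varepsilon$ with $\lambda^2>(2+\varepsilon)\cdot 2\beta$; no joint limit $\varepsilon\downarrow 0$, $\lambda\downarrow 2\sqrt{\beta}$ is needed.)

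The genuine gap is that neither mechanism you offer for the central moderate-deviation bound closes the argument as stated. In the two-scale version, a single medium term is fatal: a variable in $(\delta_n\sqrt{n},\,x_n]$ can by itself exhaust the deviation budget (note $\lambda\leq 1$ is possible when $\beta<1/4$, and even for $\lambda>1$ one such term degrades the Gaussian exponent from $\lambda^2/2$ to $(\lambda-1)^2/2$, losing sharpness near $\lambda=2\sqrt{\beta}$), so ``enough medium terms'' includes $j=1$; but the corresponding cost $\sum_n n^{2\beta}\P(|Y_1|>\delta_n\sqrt{n})$ can diverge under the stated moment --- a tail of the form $\P(|Y_1|>t)\asymp \big(\ln t\big)^{2\beta+1}t^{-(4\beta+2)}\big(\ln t\big)^{-(1+\epsilon)}$ satisfies the hypothesis yet makes the weighted terms of order $(\ln n)^{c}/n$ with $c>-1$. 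The Fuk--Nagaev alternative fails for the dual reason: to obtain the sharp Gaussian factor $\exp\big(-x_n^2/((2+\varepsilon)n)\big)$ the truncation level $y$ must satisfy $y\,x_n=o(n)$, i.e.\ $y=o(\sqrt{n/\ln n})$, and at that level the companion term $n\P(|Y_1|>y)$ reproduces exactly the divergent series above; while truncating at $y\asymp x_n$ destroys the $\ln n$ in the exponent, as you yourself observe. The standard repair is a \emph{three}-range cut: discard single terms at level $\eta x_n$ for arbitrary $\eta>0$ (your Corollary~\ref{coro:2} computation is insensitive to the constant $\eta$), apply Bernstein below $\delta_n\sqrt{n}$, and in the middle range $(\delta_n\sqrt{n},\,\eta x_n]$ observe that exhausting a budget $\varepsilon x_n$ now requires $j\geq \varepsilon/\eta$ terms, so the bound $\binom{n}{j}p_n^{\,j}$ with $j$ made large by taking $\eta$ small is summable after weighting. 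Your sketch correctly identifies the obstacle but, as submitted, does not surmount it.
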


\end{document}